\DeclareMathAlphabet\mathbfcal{OMS}{cmsy}{b}{n}
\theoremstyle{plain}
\newtheorem{lemma}{Lemma}
\newtheorem{proposition}{Proposition}
\newcommand{\End}{{\mbox{End}}}
\newcommand{\Her}{{\mbox{Herm}}}
\newcommand{\vol}{{\mbox{vol}}}
\newtheorem*{observation*}{Observation}
\newtheorem*{theorem*}{Theorem}
\newtheorem*{claim*}{Claim}  
\title{\textbf{Functionals on the space of almost complex structures}}
\author{Gabriella Clemente}
\date{}
\begin{document}
\maketitle
\begin{abstract}
We study functionals on the space of almost complex structures on a compact $\mathbb{C}$-manifold, whose variational properties could be used to tackle Yau's Challenge.
\end{abstract}
\tableofcontents
\addcontentsline{toc}{section}{Introduction}
\section*{Introduction}

This is supposed to be a step in the direction of understanding Yau's Challenge, which is to determine if there are compact almost complex manifolds of dimension at least $3$ that cannot be given an integrable almost complex structure \cite{YC1}, through the calculus of variations. S-T.\ Yau proposed devising a parabolic flow on the space of almost complex structures to study this question \cite{YC}. 

Let $X$ be a real $2n$-dimensional compact manifold, and $AC(X)=\{J \in C^{\infty}(X,\End_{\mathbb{C}}(T_X)) \mid J^2=-Id\}$ be the space of almost complex structures on $X.$ This is an almost complex Fr{\'e}chet manifold, and for any $J \in AC(X),$ $T_{Ac(X),J}=\{h \in C^{\infty}(X,\End_{\overline{\mathbb{C}}}(T_X)) \mid J \circ h + h \circ J=0\},$ which can be seen from the identity $0=dJ^2=dJ \circ J + J \circ dJ.$ An almost complex structure $\mathcal{J}:AC(X) \to \End(T_{AC(X)})$ is given as $\mathcal{J}(J)(u)=J \circ u,$ for any $J \in AC(X)$ and $u \in T_{AC(X),J}.$ Let $g$ be a fixed Riemannian metric on $X,$ and note that for any $J \in AC(X),$ we get an almost hermitian metric $g_J:=\frac{1}{2}\big(g(\cdot,\cdot)+g(J \cdot,J \cdot)\big).$ We are looking for an energy functional $\mathcal{F}$ on $AC(X)$ whose associated gradient flow is a parabolic PDE. Ideally, the critical points of $\mathcal{F}$ should be the integrable almost complex structures on $X,$ and the Euler-Lagrange equation of $\mathcal{F}$ should be elliptic so that the complex structures on $X$ are energy minimizers. We would then expect any solution of the flow equation of $\mathcal{F}$ to converge to a genuine complex structure on $X.$ In some special cases, such as when $AC(X)$ is connected (e.g.\ $AC(S^6)$), the non-existence of a flow solution might translate to the non-existence of complex structures. A more thorough development of these ideas will be the subject of future research. Here we only derive the Euler-Lagrange equations of the functionals $\mathcal{N},\widetilde{\mathcal{N}}:AC(X)\to \mathbb{R}_{\geq 0},$  \[\mathcal{N}(J):=\int_X \parallel N_J \parallel^2_{g_J} \vol_g,\mbox{ and } \widetilde{\mathcal{N}}(J):=\int_X \parallel N_J \parallel^2_{g_J} \vol_{g_J},\] where $\vol_g$ is the Riemannian volume form, and $\vol_{g_J}$ is the volume form of $\omega:=\frac{i}{2}(g_J - \overline{g_J}).$ Note that both $\mathcal{N},$ and $\widetilde{\mathcal{N}}$ are identically zero on the integrable structures. A very similar, real version of $\mathcal{N}$ appears in \cite{mil}.

We can think of first variations in terms of linear approximations. Let $\Her\big(\Lambda^2 T^{{0,1}^*}_X \otimes T^{1,0}_X \big)$ denote the space of hermitian metrics on $\Lambda^2 T^{{0,1}^*}_X \otimes T^{1,0}_X.$ Let \[f:AC(X) \to C^{\infty}\big(X,\Lambda^2 T^{{0,1}^*}_X \otimes T^{1,0}_X\big) \times \Her\big(\Lambda^2 T^{{0,1}^*}_X \otimes T^{1,0}_X \big)\] be the function $f(I)=(N,h)=(N(I),h(I))=(N_I,\overline{{g_I}^{-1}} \wedge \overline{{g_I}^{-1}} \otimes g_I),$ and \[\phi:C^{\infty}\big(X,\Lambda^2 T^{{0,1}^*}_X \otimes T^{1,0}_X \big) \times \Her\big(\Lambda^2 T^{{0,1}^*}_X \otimes T^{1,0}_X \big) \to C^{\infty}(X,\mathbb{R}_{\geq 0})\] be the function $\phi(N,h)=\overline{h^{-1}} \wedge \overline{h^{-1}}\otimes h(N,N),$ and define $\psi:=\phi \circ f,$ where $\psi(I)=\overline{{g_I}^{-1}} \wedge \overline{{g_I}^{-1}} \otimes g_I (N_I,N_I)=:\|N_I\|^2_{g_I}.$ And now, let \[F:AC(X) \to C^{\infty}\big(X,\Lambda^2 T^{{0,1}^*}_X \otimes T^{1,0}_X\big) \times \Her\big(\Lambda^2 T^{{0,1}^*}_X \otimes T^{1,0}_X \big) \times \Omega^{n,n}(X),\] $F(I)=(N(I),h(I),\vol(I))=(N_I,\overline{{g_I}^{-1}} \wedge \overline{{g_I}^{-1}} \otimes g_I,\vol_{g_I}),$ and \[\Phi:C^{\infty}\big(X,\Lambda^2 T^{{0,1}^*}_X \otimes T^{1,0}_X\big) \times \Her\big(\Lambda^2 T^{{0,1}^*}_X \otimes T^{1,0}_X \big) \times \Omega^{n,n}(X) \to C^{\infty}(X,\mathbb{R}_{\geq 0}),\] $\Phi(N,h,\vol)=\overline{h^{-1}} \wedge \overline{h^{-1}}\otimes h(N,N)\vol_h,$ and define $\Psi:=\Phi \circ F$ so that $\Psi(I)=\overline{{g_I}^{-1}} \wedge \overline{{g_I}^{-1}} \otimes g_I (N_I,N_I)\vol_{g_I}=:\|N_I\|^2_{g_I}\vol_{g_I}.$ Let $\gamma=\frac{1}{2}\big(g(u \cdot,J \cdot)+g(J \cdot,u \cdot)\big).$ Let $J \in AC(X),$ and $\delta J$ be a small perturbation of $J;$ i.e.\ if $u$ is a nearby structure in $AC(X),$ then $\delta J=(J+u)-J.$  Let $\delta N=N(J+\delta J)-N(J)=N_{J+u} - N_J = d_J N_J(u)+O(u^2),$ $\delta h=h(J+\delta J)-h(J)=g_{J+u}-g_J = d_J g_J (u)+O(u^2)=\gamma+O(u^2),$ and $d\vol=\vol(J+\delta J)-\vol(J)=\vol_{g_{J+u}}-\vol_{g_J}=d_J(\vol_{g_J})(u)+O(u^2)=d_J(\vol_{g_J})(u).$ Then, 
\begin{equation*}
\begin{split}
d_J (\|N_J\|^2_{g_J})(u)&\approx [\phi(N+\delta N,h + \delta h)-\phi(N,h + \delta h)]+[\phi(N,h + \delta h)-\phi(N,h)]\\
&\approx d_N \phi(N,h + \delta h)\cdot \delta N +d_h \phi(N,h) \cdot \delta h\\
&=d_{N_J}(\|N_J\|^2_{g_{J+u}})\cdot d_J N_J(u)+d_{g_J}(\|N_J\|^2_{g_J})\cdot \gamma,
\end{split}
\end{equation*}
and 
\begin{equation*}
\begin{split}
d_J(\|N_J\|^2_{g_J} \vol{g_J})(u)&\approx [\Phi(N+\delta N,h + \delta h,\vol + \delta \vol)-\phi(N,h + \delta h,\vol + \delta \vol)]+\\
&[\Phi(N,h + \delta h,\vol + \delta \vol)-\Phi(N,h,\vol + \delta \vol)]+\\
&[\Phi(N,h,\vol + \delta \vol)-\Phi(N,h,\vol)]\\
&\approx d_N \Phi(N,h + \delta h,\vol + \delta \vol)\cdot \delta N + d_h \Phi(N,h,\vol + \delta \vol) \cdot \delta h +\\
& d_{\vol}\Phi(N,h,\vol)\cdot \delta \vol\\
&=d_{N_J}(\|N_J\|^2_{g_{J+u}}\vol_{g_{J+u}})\cdot d_J N_J(u)+d_{g_J}(\|N_J\|^2_{g_J}\vol_{g_{J+u}})\cdot\gamma+\\
&d_{\vol}(\|N_J\|^2_{g_J}\vol_{g_J})\cdot d_J(\vol_{g_J})(u).
\end{split}
\end{equation*} 

The Nijenhuis tensor $N_J$ is $g_J$-orthogonal to $dN_J(u)^{2,0}$ and $dN_J(u)^{1,1}.$ This is a consequence of the $g_J$-orthogonality of the holomorphic, and antiholomorphic tangent bundles of $X.$ \emph{From now on, all $O(u^2)$-terms will be omitted throughout with only a few exceptions in the last section.} Then, we find that   

\begin{equation*}
\begin{split}
d_N(\|N_J\|^2_{g_{J+u}})\cdot d_J N_J(u)&=\big\langle d_J N_J(u),N_J\big\rangle_{g_{J+u}}+\overline{\big\langle d_J N_J(u),N_J\big\rangle_{g_{J+u}}}\\
&=2\Re\big[\langle dN^{0,2}_J(u),N_J \rangle_{g_{J+u}} \big].
\end{split}
\end{equation*}

\begin{proposition}\label{first}
The first variation of $\mathcal{N}$ is 
\[d_J \mathcal{N}(J)(u)=\int_X \Big\{2\Re\big[\langle dN^{0,2}_J(u),N_J \rangle_{g_{J+u}}\big] + d_{g_J}(\parallel N_J \parallel^2_{g_J})\cdot \gamma\Big\} \vol_g,\] and that of $\widetilde{\mathcal{N}}$ is
\begin{equation*}
\begin{split}
d_J \widetilde{\mathcal{N}}(J)(u)&=\int_X 2\Re\big[\langle dN^{0,2}_J(u),N_J \rangle_{g_{J+u}} \big] \vol_{g_{J+u}} + \int_X d_{g_J}(\parallel N_J \parallel^2_{g_{J+u}} \vol_{g_{J+u}})\cdot \gamma+\\
&\int_X d_{\vol}(\|N_J\|^2_{g_J} \vol_{g_J})\cdot d_J(\vol_{g_J})(u).
\end{split}
\end{equation*}
\end{proposition}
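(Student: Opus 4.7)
My plan is to read both variations off directly from the chain-rule decompositions already assembled in the paragraph preceding the proposition, and then to collapse the relevant inner-product term into its $(0,2)$-component using the Nijenhuis orthogonality identity that was established just above.

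For $\mathcal{N}$ the volume form $\vol_g$ is fixed and independent of $J$, so differentiation under the integral sign yields
\[
d_J\mathcal{N}(J)(u) = \int_X d_J\!\big(\|N_J\|^2_{g_J}\big)(u)\, \vol_g.
\]
I would then substitute the two-term expansion
\[
d_J(\|N_J\|^2_{g_J})(u)\approx d_{N_J}(\|N_J\|^2_{g_{J+u}})\cdot d_J N_J(u) + d_{g_J}(\|N_J\|^2_{g_J})\cdot \gamma
\]
coming from $\psi = \phi \circ f$ and $\delta h = \gamma + O(u^2)$, and immediately use the identity
\[
d_{N_J}(\|N_J\|^2_{g_{J+u}})\cdot d_J N_J(u) = 2\Re\big[\langle dN^{0,2}_J(u),N_J\rangle_{g_{J+u}}\big],
\]
whose derivation hinges on the vanishing of $\langle dN_J(u)^{2,0},N_J\rangle_{g_J}$ and $\langle dN_J(u)^{1,1},N_J\rangle_{g_J}$ via the $g_J$-orthogonality of the holomorphic and antiholomorphic bundles. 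This gives exactly the first formula.

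For $\widetilde{\mathcal{N}}$ the integrand is $\|N_J\|^2_{g_J}\vol_{g_J}$, so I would invoke the three-term expansion coming from $\Psi = \Phi\circ F$, separating the variation into an $N$-piece (paired with the perturbed volume $\vol_{g_{J+u}}$), an $h$-piece (paired with $\vol_{g_{J+u}}$ and contracted with $\gamma$), and a $\vol$-piece (contracted with $d_J(\vol_{g_J})(u)$). Integrating termwise over $X$ and substituting the same $(0,2)$-reduction in the $N$-piece produces precisely the three integrals in the stated formula. The main obstacle I anticipate is not conceptual but bookkeeping: one has to be scrupulous about which metric — $g_J$ or $g_{J+u}$ — appears in each inner product, and to verify that every replacement of $g_{J+u}$ by $g_J$ (or vice versa) which the notation silently invites only affects the answer at order $O(u^2)$, so that it is legitimate under the blanket convention adopted just before the proposition. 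Once those orderings are pinned down, the proposition is essentially a transcription of the chain-rule expansions already written out above.
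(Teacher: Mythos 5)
Your proposal is correct and follows exactly the route the paper takes: the paper gives no separate proof of this proposition, since it is read off directly by integrating the displayed chain-rule expansions of $d_J(\|N_J\|^2_{g_J})(u)$ and $d_J(\|N_J\|^2_{g_J}\vol_{g_J})(u)$ over $X$ and substituting the identity $d_N(\|N_J\|^2_{g_{J+u}})\cdot d_J N_J(u)=2\Re\big[\langle dN^{0,2}_J(u),N_J\rangle_{g_{J+u}}\big]$ obtained from the orthogonality of $N_J$ to the $(2,0)$- and $(1,1)$-components. Your cautionary remark about tracking which of $g_J$, $g_{J+u}$ appears in each term, modulo the blanket $O(u^2)$ convention, is exactly the bookkeeping the paper relies on.
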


In order to retrieve the Euler-Lagrange equations of interest, we need to integrate $2\Re\big[\langle dN^{0,2}_J(u),N_J \rangle_{g_{J+u}} \big]$ by parts. We do this in the coordinates defined below.

\textbf{Acknowledgment} I thank Jean-Pierre Demailly for his suggestions, and I thank the European Research Council for financial support from the grant project ``Algebraic and K{\"a}hler geometry" (ALKAGE, no.\ 670846).

\addcontentsline{toc}{section}{Coordinates}
\section*{Coordinates}
We try to develop intrinsic complex coordinates on the almost hermitian manifold $(X,J,g_J),$ centered at a given point $p \in X,$ that are the next best alternative to both holomorphic coordinates, which exist only when $J$ is integrable, and geodesic coordinates at $p,$ which exits iff the fundamental form $\omega$ of $g_J$ is K{\"a}hler. 

Recall that if $(z_k)_{1 \leq k \leq n}$ are holomorphic coordinates on $U \subset X,$ then $\bar{\partial}_J z_k=0$ on some neighborhood $U_x \subset X$ of every $x \in U.$ We do not have that in the almost complex case. However, we can design complex coordinates, which will be denoted here by $w_k,$ for which $\bar{\partial}_J w_k$ is as close as possible to being zero on each $U_x.$ First note that we can always find complex coordinates $z_k \in C^{\infty}(U_p,\mathbb{C}),$ centered at $p,$ such that $\bar{\partial}_J z_k(p)=0.$ Then, $\big(dz_k(p)\big)_{1 \leq k \leq n}=\big(\partial_J z_k(p)\big)_{1 \leq k \leq n}$ is a basis of $(T^{1,0}_{X,p})^*,$ and so $\big(\overline{dz_k(p)}\big)_{1 \leq k \leq n}=\big(\overline{\partial_J z_k(p)}\big)_{1 \leq k \leq n}$ is a basis of $(T^{0,1}_{X,p})^*.$ Hence, we get a local frame $\big(\overline{\partial_J z_k}\big)_{1 \leq k \leq n}$ of $(T^{0,1}_{X,p})^*,$ and so \[\bar{\partial}_J z_k=\sum_{1 \leq l \leq n} f_{kl}(z)\overline{\partial_J z_l},\] where $f_{kl} \in C^{\infty}(U_p,\mathbb{C}),$ and $f_{kl}(p)=0.$ Note that if $z_k$ were holomorphic, all of the coefficient functions $f_{kl}$ would be identically zero. Here, for every $1 \leq l \leq n,$ each of these functions has a Taylor expansion
\begin{equation*}
\begin{split}
f_{kl}(z)&=\sum_{|\alpha|+|\beta| \leq N} c_{kl\alpha \beta}z^{\alpha}\bar{z}^{\beta}+O(|z|^{N+1}).
\end{split}
\end{equation*}

Given that we will only differentiate once, we may instead work with the truncation 
\begin{equation*}
\begin{split}
f_{kl}(z)&=\sum_{1 \leq j \leq n} (a_{jkl}z_j + a'_{jkl} \bar{z}_j) + O(|z|^2).
\end{split}
\end{equation*}  
Again, if the $z_k$ were holomorphic, we would in particular have that $a_{jkl}=a'_{jkl}=0,$ for all $1 \leq j,k,l \leq n.$ We wish to emulate this situation ($a_{jkl}=a'_{jkl}=0$) in the almost complex case. Concretely, we are looking for new coordinates that anihilate as many of the coefficients $a_{jkl},$ and $a'_{jkl}$ as possible. To that end, let \[w_k=z_k+\sum_{1 \leq r,s \leq n} (\alpha_{krs} z_r z_s + \beta_{krs} z_r \bar{z}_s + \gamma_{krs} \bar{z}_r \bar{z}_s)+O(|z|^3).\]
We still have that $w_k(p)=0,$ $\bar{\partial}_J w_k (p)=0,$ and $dw_k(p)=dz_k(p),$ and we still get a local frame $\big(\overline{\partial_J w_k}\big)_{1 \leq k \leq n}$ of $(T^{0,1}_X|_{U_p})^*$ so that 
\begin{equation*}
\begin{split}
\bar{\partial}_J w_k&=\sum_{1 \leq j,l \leq n} (b_{jkl}w_j + b'_{jkl} \bar{w}_j+O(|w|^2))\overline{\partial_J w_l}.
\end{split}
\end{equation*}
We will see that the holomorphic condition prescribes $\beta_{klm},$ and $\gamma_{klm},$ while the geodesic condition can be used to solve for $\alpha_{klm}.$ The point is that we are reducing the problem of finding
optimal complex coordinates on an almost hermitian manifold to finding $\alpha_{krs},\beta_{krs},\gamma_{krs}$ that anihilate the maximum number of coefficients of the Taylor expansions of $f_{kl},$ and $\omega_{\lambda\bar{\mu}}.$ We call $w_k=z_k+\sum_{1 \leq r,s \leq n} (\alpha_{krs} z_r z_s + \beta_{krs} z_r \bar{z}_s + \gamma_{krs} \bar{z}_r \bar{z}_s)+O(|z|^3),$ $1 \leq k \leq n,$ with $\alpha_{krs},\beta_{krs},\gamma_{krs}$ subject to these constraints \emph{almost holomorphic geodesic coordinates on} $X$ at $p.$

\begin{lemma}\label{ach}
Any complex coordinates $z_k \in C^{\infty}(U_p,\mathbb{C}),$ $1 \leq k \leq n,$ on an almost hermitian manifold $(X,J,g_J)$ that are centered at $p,$ and for which $\bar{\partial}_J z_k(p)=0,$ and $\Big(\frac{\partial}{\partial z_k}(p)\Big)_{1 \leq k \leq n}$ is an orthonormal basis of $T^{1,0}_{X,p},$ determine almost holomorphic geodesic coordinates at $p.$ Specifically, if the Taylor expansion of $\bar{\partial}_J z_k$ on $U_p$ is \[\bar{\partial}_J z_k=\sum_{1 \leq j,l \leq n} \big(a_{kjl}z_j + a'_{kjl} \bar{z}_j +O(|z|^2)\big)\overline{\partial_J z_l},\] and if \[\omega_{m\bar{l}}=\delta_{ml}+\sum_{s=1}^n (\tau_{m\bar{l}s} z_s +\tau'_{m\bar{l}\bar{s}} \bar{z}_s)+O(|z|^2),\] then 
\begin{equation*}
\begin{split}
w_k&=z_k-\sum_{1 \leq m,l \leq n} \big[\frac{1}{4}(a_{lkm}+a_{mkl}+\tau_{l\bar{k}m}+\tau_{m\bar{k}l}) z_lz_m+a_{klm}z_l \bar{z}_m + \frac{1}{4}(a'_{klm}+a'_{kml})\bar{z}_l \bar{z}_m\big]+\\
&O(|z|^3).
\end{split}
\end{equation*}
\end{lemma}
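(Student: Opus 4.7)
The plan is to determine the coefficients $\alpha,\beta,\gamma$ by imposing the two defining conditions in turn: the almost holomorphic requirement, that $\bar\partial_J w_k$ carry no first-order Taylor term at $p$ in the frame $\overline{\partial_J w_l}$, fixes $\beta$ and $\gamma$; the geodesic requirement, that $\omega_{m\bar l}$ carry no first-order Taylor term at $p$ in the new coordinates, then fixes $\alpha$.

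For the almost holomorphic condition, I apply $\bar\partial_J$ to the Ansatz for $w_k$ and use the Leibniz rule. Because $\bar\partial_J z_r=O(|z|)$ and $\bar\partial_J\bar z_r=\overline{\partial_J z_r}$, only the $z_r\bar z_s$ and $\bar z_r\bar z_s$ corrections contribute at first order:
\begin{equation*}
\bar\partial_J w_k = \bar\partial_J z_k + \sum_{r,s}\beta_{krs}\,z_r\,\overline{\partial_J z_s}+\sum_{r,s}\gamma_{krs}\bigl(\bar z_r\,\overline{\partial_J z_s}+\bar z_s\,\overline{\partial_J z_r}\bigr)+O(|z|^2).
\end{equation*}
Substituting the Taylor expansion of $\bar\partial_J z_k$, and observing that $\overline{\partial_J z_l}-\overline{\partial_J w_l}=O(|z|)$ only shifts the error by $O(|z|^2)$, the first-order coefficients in the $\overline{\partial_J w_l}$ frame read $a_{kjl}+\beta_{kjl}$ on $z_j\overline{\partial_J w_l}$ and $a'_{kjl}+\gamma_{kjl}+\gamma_{klj}$ on $\bar z_j\overline{\partial_J w_l}$. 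The first vanishes for $\beta_{kjl}=-a_{kjl}$. Since $\bar z_j\bar z_l=\bar z_l\bar z_j$, the coefficient $\gamma_{kjl}$ may be taken symmetric in $(j,l)$; symmetrizing the second equation and solving then yields $\gamma_{kjl}=-\tfrac14(a'_{kjl}+a'_{klj})$, which anihilates the symmetric part (the residual antisymmetric part is intrinsic, related to the Nijenhuis tensor, and cannot be removed).

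For the geodesic condition, I would compute the Taylor expansion at $p$ of $\tilde\omega_{m\bar l}=\omega(\partial/\partial w_m,\partial/\partial\bar w_l)$ via the chain rule. Inverting $w_k=z_k+O(|z|^2)$ to first order,
\begin{equation*}
\frac{\partial z_p}{\partial w_m}=\delta_{pm}-2\sum_s\alpha_{pms}z_s-\sum_s\beta_{pms}\bar z_s+O(|z|^2),
\end{equation*}
together with the remaining Jacobian blocks obtained by conjugation and by the symmetries of $\alpha,\gamma$. Substituting the Taylor expansion of $\omega_{p\bar q}$ and the first-order expansions of $\omega_{pq}$ and $\omega_{\bar p\bar q}$ (which vanish at $p$ since $\bar\partial_J z_k(p)=0$, but carry $a,a'$ coefficients at first order because of the $dz$ versus $\partial_J z$ discrepancy off $p$), the $z_s$-coefficient of $\tilde\omega_{m\bar l}$ comes out as a linear combination of $\tau_{m\bar l s}$, $a$-coefficients, and the $\alpha$'s. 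Setting it to zero and symmetrizing over $(l,m)$ to respect $\alpha_{klm}=\alpha_{kml}$ then produces the stated formula
\begin{equation*}
\alpha_{klm}=-\tfrac14\bigl(a_{lkm}+a_{mkl}+\tau_{l\bar k m}+\tau_{m\bar k l}\bigr).
\end{equation*}
The $\bar z_s$-coefficient of $\tilde\omega_{m\bar l}$ is, by reality of $\omega$, the complex conjugate of an equation already accounted for and adds no new constraint.

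The main obstacle is the combinatorial bookkeeping in the geodesic step: two independent sources feed into the first-order expansion of $\tilde\omega_{m\bar l}$, namely the tensorial transformation of $\omega$ (bringing in $\tau,\tau'$) and the frame adjustments from $\bar\partial_J z\neq 0$ off $p$ (bringing in $a,a'$). Separating these contributions, tracking the complex conjugates that appear through the $\bar z$-Jacobian blocks, and then symmetrizing correctly over $(l,m)$ so that the clean four-term combination $a_{lkm}+a_{mkl}+\tau_{l\bar k m}+\tau_{m\bar k l}$ crystallizes is where the real work lies. The almost holomorphic step, by contrast, is essentially routine Taylor coefficient matching.
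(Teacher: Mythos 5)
Your treatment of the almost holomorphic condition is correct and is exactly the paper's argument: the only first-order contributions to $\bar\partial_J w_k$ come from $\bar\partial_J z_k$ itself and from $\bar\partial_J$ hitting the $\bar z$-factors, giving the coefficients $a_{kjl}+\beta_{kjl}$ and $a'_{kjl}+2\gamma_{kjl}$ (after symmetrizing $\gamma$), whence $\beta_{kjl}=-a_{kjl}$ and $\gamma_{kjl}=-\tfrac14(a'_{kjl}+a'_{klj})$. Your remark that the antisymmetric residue is intrinsic is also a fair gloss.

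The geodesic step, however, is not a proof but a declaration of intent: you say you ``would compute'' the expansion of $\tilde\omega_{m\bar l}$, you identify (correctly) that this is where all the work lies, and you then assert that the outcome is the stated value of $\alpha_{klm}$ without exhibiting the linear combination whose vanishing you are solving. This is precisely the computation that cannot be waved through, because the answer is sensitive to the route taken. The paper avoids your chain-rule/Jacobian route entirely: it phrases the geodesic condition as $\omega=\tfrac{i}{2}\sum_{m,l}\bigl(\delta_{ml}+O(|w|^2)\bigr)\partial_J w_m\wedge\overline{\partial_J w_l}$ in the $(1,0)$-coframe, so that only the expansion $\partial_J w_m=\partial_J z_m+\sum_{l,j}(2\alpha_{mlj}z_j+\beta_{mlj}\bar z_j)\partial_J z_l+O(|z|^2)$ and its conjugate are needed; the $a$-coefficients then enter only through $\overline{\beta_{mlj}}=-\overline{a_{mlj}}$, and the condition becomes the vanishing of the $(m,j)$-symmetric part of $\alpha_{lmj}+\tfrac12(\overline{\beta_{mlj}}-\tau_{m\bar l j})$, i.e.\ $\alpha_{lmj}=\tfrac14(\overline{a_{mlj}}+\overline{a_{jlm}}+\tau_{m\bar l j}+\tau_{j\bar l m})$ --- note the complex conjugates on the $a$'s and the overall sign, which differ from the unconjugated $-\tfrac14(a_{lkm}+a_{mkl}+\tau_{l\bar k m}+\tau_{m\bar k l})$ you wrote down. (The paper's own statement and its proof in fact disagree on exactly this point, so matching the statement's coefficient is not evidence of correctness.) Your route through the coordinate vector fields $\partial/\partial w_m$ additionally drags in the impure-type pieces of $dw_m$ and hence extra $a,a'$ contributions at first order, which you acknowledge but do not track; until that bookkeeping is actually carried out and shown to collapse to a four-term combination, the determination of $\alpha$ --- which is the substance of the lemma --- remains unproved.
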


\begin{proof}
Since $z_m \bar{\partial}_J z_l=O(|z|^2),$ $\bar{z}_m \bar{\partial}_J z_l=O(|z|^2),$ and $\bar{\partial}_J \bar{z}_l=\overline{\partial_J z_l},$ and since $\gamma_{klm}$ is $(l,m)$-symmetric, 
\begin{equation*}
\begin{split}
\bar{\partial}_J w_k &= \bar{\partial}_J z_k +\sum_{1 \leq l,m \leq n} \bar{\partial}_J \big(\alpha_{klm}z_l z_m + \beta_{klm} z_l \bar{z}_m + \gamma_{klm} \bar{z}_l \bar{z}_m)+O(|z|^3)\big)\\
&=\bar{\partial}_J z_k + \sum_{1 \leq l,m \leq n} \big(\beta_{klm} z_l \overline{\partial_J z_m}+(\gamma_{klm}+\gamma_{kml})\bar{z}_l\overline{\partial_J z_m} \big)+O(|z|^2)\\
&=\sum_{1 \leq l,m \leq n} \big(a_{klm}z_l + a'_{klm}\bar{z}_l\big)\overline{\partial_J z_m}+\sum_{1 \leq l,m \leq n} \big(\beta_{klm} z_l \overline{\partial_J z_m}+2\gamma_{klm}\bar{z}_l\overline{\partial_J z_m} \big)+O(|z|^2)\\
&=\sum_{1 \leq l,m \leq n} \big[(a_{klm}+\beta_{klm})z_l + \big(a'_{klm}+2\gamma_{klm}\big)\bar{z}_l\big]\overline{\partial_J z_m}+O(|z|^2).
\end{split}
\end{equation*}
Based on this calculation, $\alpha_{klm}$ is free to be any complex number, while $\beta_{klm}=-a_{klm}.$ And we may take, at best, the symmetric part of $a'_{klm}+2\gamma_{klm}$ to be zero, which is achieved by setting $\gamma_{klm}=-\frac{1}{4}(a'_{klm}+a'_{kml}).$ 
So far, we gathet that \[w_k=z_k+\sum_{1 \leq m,l \leq n} \big[\alpha_{klm}z_lz_m-a_{klm}z_l \bar{z}_m - \frac{1}{4}(a'_{klm}+a'_{kml})\bar{z}_l \bar{z}_m\big]+O(|z|^3).\] Next, we optimize $\alpha_{klm}$ subject to the constraint of $w_k$ being geodesic coordinates at $p.$ Since $\alpha_{mlj}$ is $(l,j)$-symmetric,

\[\partial_J w_m = \partial_J z_m + \sum_{l,j=1}^n (2 \alpha_{mlj}z_j+\beta_{mlj}\bar{z}_j)\partial_J z_l +O(|z|^2)\] so that \[\overline{\partial_J w_m} = \overline{\partial_J z_m} + \sum_{l,j=1}^n (2 \overline{\alpha_{mlj}}\bar{z}_j+\overline{\beta_{mlj}}z_j)\overline{\partial_J z_l} +O(|z|^2).\] Then, since $O(|w|^2)=O(|z|^2),$
\begin{equation*}
\begin{split}
\frac{i}{2} \sum_{m,l=1}^n \big(\delta_{ml}+O(|w|^2)\big)\partial_J w_m \wedge \overline{\partial_J w_l} &=\frac{i}{2} \sum_{m=1}^n \partial_J w_m \wedge \overline{\partial_J w_m} +O(|w|^2) \\
&=\frac{i}{2} \sum_{m=1}^n \Big[\partial_J z_m \wedge \overline{\partial_J z_m} +\\
&\sum_{l,j=1}^n (2\overline{\alpha_{mlj}} \bar{z}_j + \overline{\beta_{mlj}}z_j)\partial_J z_m \wedge \overline{\partial_J z_l}+\\
&\sum_{l,j=1}^n (2\alpha_{lmj} z_j+\beta_{lmj} \bar{z}_j)\partial_J z_m \wedge \overline{\partial_J z_l} \Big]+O(|z|^2)+O(|w|^2)\\
&=\frac{i}{2}\sum_{l,m=1}^n \Big(\delta_{ml}+\sum_{j=1}^n \big[(\overline{\beta_{mlj}}+2\alpha_{lmj})z_j +\\
&2\overline{\alpha_{mlj}} + \beta_{lmj})\bar{z}_j\big]+O(|z|^2)\Big)\partial_J z_m \wedge \overline{\partial_J z_l}\\
&=\frac{i}{2}\sum_{l,m=1}^n \Big(\delta_{ml}+\sum_{j=1}^n (\tau_{m\bar{l}j} z_j +\tau'_{m\bar{l}\bar{j}} \bar{z}_j)+O(|z|^2)\Big)\partial_J z_m \wedge \overline{\partial_J z_l}\\
&=\omega,
\end{split}
\end{equation*}
i.e.\ $\omega=\frac{i}{2} \sum_{m,l=1}^n \big(\delta_{ml}+O(|w|^2)\big)\partial_J w_m \wedge \overline{\partial_J w_l},$ is a condition that can be attained, at best, by setting the $(m,j)$-symmetric part of $\alpha_{lmj}+\frac{1}{2}(\overline{\beta_{mlj}}-\tau_{m\bar{l}j})$ equal to zero. Thus, we may take 
\begin{equation*}
\begin{split}
\alpha_{lmj}&=-\frac{1}{4}(\overline{\beta_{mlj}}+\overline{\beta_{jlm}}-\tau_{m \bar{l}j}-\tau_{j \bar{l}m})\\
&=\frac{1}{4}(\overline{a_{mlj}}+\overline{a_{jlm}}+\tau_{m \bar{l}j}+\tau_{j \bar{l}m}),
\end{split}
\end{equation*}
and therefore
\begin{equation*}
\begin{split}
w_k&=z_k+\sum_{1 \leq m,l \leq n} \big[\frac{1}{4}(\overline{a_{lkm}}+\overline{a_{mkl}}+\tau_{l\bar{k}m}+\tau_{m\bar{k}l}) z_lz_m-a_{klm}z_l \bar{z}_m - \frac{1}{4}(a'_{klm}+a'_{kml})\bar{z}_l \bar{z}_m\big]+\\
&O(|z|^3).
\end{split}
\end{equation*}
\end{proof}

\addcontentsline{toc}{section}{Euler-Lagrange equations}
\section*{Euler-Lagrange equations}
Let $(w_k)_{k=1}^n$ be almost holomorphic geodesic coordinates at $p.$ We now have local coordinate frames $\Big(\frac{\partial^{1,0}}{\partial w_k}\Big)_{1 \leq k \leq n}$ of $T^{1,0}_X,$ and $\Big(\frac{\partial^{0,1}}{\partial \bar{w}_k}\Big)_{1 \leq k \leq n}$ of $T^{0,1}_X$ with dual coframes $\big(dw_k^{1,0}\big)_{1 \leq k \leq n},$ and $\big(d\bar{w}_k^{0,1}\big)_{1\leq k \leq n}.$ We also have the local coordinate expressions 
\[(g_J)_{\lambda \bar{\mu}}=\Big\langle \frac{\partial^{1,0}}{\partial w_{\lambda}},\frac{\partial^{0,1}}{\partial \bar{w}_{\mu}}\Big\rangle_{g_J}=\delta_{\lambda \mu}+\sum_{m=1}^n(\tau_{\lambda \bar{\mu} m} w_m + \tau'_{\lambda \bar{\mu} m} \bar{w}_m)+O(|w|^2),\]  

\[N_J=\sum_{i,j,k=1}^n N^k_{\bar{i}\bar{j}} d\bar{w}_i^{0,1} \wedge d\bar{w}_j^{0,1} \otimes \frac{\partial^{1,0}}{\partial w_k},\] and likewise

\[dN^{0,2}_J(u)=\sum_{i,j,k=1}^n \big(dN_J(u)\big)^k_{\bar{i}\bar{j}} d\bar{w}_i^{0,1} \wedge d\bar{w}_j^{0,1} \otimes \frac{\partial^{1,0}}{\partial w_k}.\] Here we write $dV=\big(\frac{i}{2}\big)^n dw_1^{1,0} \wedge d\bar{w}_1^{0,1} \wedge \dots \wedge dw_n^{1,0} \wedge d\bar{w}_n^{0,1},$ $h:=g_{J+u}=g_{J}+\gamma+O(u^2),$ and \[h_{i \bar{j}}=\delta_{ij}+\sum_{m=1}^n (\tau_{i\bar{j}m} w_m + \tau'_{i\bar{j}m} \bar{w}_m)+O(|w|^2)+\gamma_{i\bar{j}}+O(u^2),\] the components of $\overline{h^{-1}}$ then being 
\begin{equation*}
\begin{split}
h^{i\bar{j}}&:=\delta_{ij}- \sum_{m=1}^n (\overline{\tau_{i\bar{j}m}} \bar{w}_m + \overline{\tau'_{i\bar{j}m}} w_m)-\overline{\gamma_{i\bar{j}}}+\sum_{c,v=1}^n (\overline{\tau_{i\bar{c}v}} \bar{w}_v + \overline{\tau'_{i\bar{c}v}} w_v)\overline{\gamma_{c \bar{j}}}+\\
&\sum_{c,v=1}^n \overline{\gamma_{i\bar{c}}}(\overline{\tau_{c\bar{j}v}} \bar{w}_v + \overline{\tau'_{c\bar{j}v}} w_v)+O(|w|^2)+O(u^2).
\end{split}
\end{equation*}

\begin{lemma}\label{deri}
\quad \\
\[\frac{\partial^{1,0} h_{r\bar{s}}}{\partial w_i}(p)=\tau_{r\bar{s}i} +O(u), \quad \frac{\partial^{0,1} h_{r\bar{s}}}{\partial \bar{w}_i}(p)=\tau'_{r\bar{s}i} +O(u),\] \[\frac{\partial^{1,0} h^{r\bar{s}}}{\partial w_i}(p)=-\overline{\tau'_{r\bar{s}i}} +O(u), \mbox{ and } \frac{\partial^{0,1} h^{r\bar{s}}}{\partial \bar{w}_i}(p)=-\overline{\tau_{r\bar{s}i}}+O(u).\]
\end{lemma}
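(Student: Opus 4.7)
The plan is direct substitution followed by term-by-term differentiation, relying only on the Taylor expansions of $h_{r\bar{s}}$ and $h^{r\bar{s}}$ just recorded, together with the behavior of the operators $\partial^{1,0}/\partial w_i$ and $\partial^{0,1}/\partial\bar{w}_i$ acting on the coordinate functions $w_m,\bar{w}_m$ at $p$.

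First I would record the identities
\[
\tfrac{\partial^{1,0}}{\partial w_i}(w_m)(p)=\delta_{im},\qquad \tfrac{\partial^{1,0}}{\partial w_i}(\bar{w}_m)(p)=0,
\]
\[
\tfrac{\partial^{0,1}}{\partial \bar{w}_i}(\bar{w}_m)(p)=\delta_{im},\qquad \tfrac{\partial^{0,1}}{\partial \bar{w}_i}(w_m)(p)=0,
\]
which follow from the almost holomorphic geodesic coordinate property $\bar{\partial}_J w_k(p)=0.$ That identity forces $dw_m^{1,0}(p)=dw_m(p)$ and $d\bar{w}_m^{0,1}(p)=d\bar{w}_m(p)$, so that at $p$ the $(1,0)$- and $(0,1)$-projections of the ordinary coordinate derivatives act on the monomials $w_m,\bar{w}_m$ exactly as in the integrable case.

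Applying $\partial^{1,0}/\partial w_i$ to the expansion of $h_{r\bar{s}}$ and evaluating at $p$ then annihilates the constant $\delta_{rs}$, the $\tau'$-term (by the second identity), and both the $O(|w|^2)$ and $O(u^2)$ remainders; the $\tau$-term contributes $\tau_{r\bar{s}i}$. The remaining contribution $\partial^{1,0}\gamma_{r\bar{s}}/\partial w_i(p)$ is linear in the deformation $u$, since $\gamma=\tfrac{1}{2}(g(u\cdot,J\cdot)+g(J\cdot,u\cdot))$ is itself linear in $u$, and is therefore $O(u)$. The computation for $\partial^{0,1}/\partial\bar{w}_i$ is symmetric, with the roles of the $\tau$- and $\tau'$-terms interchanged, yielding $\tau'_{r\bar{s}i}+O(u)$.

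For $h^{r\bar{s}}$ I would apply the same procedure to the expansion of $\overline{h^{-1}}$ displayed in the paragraph preceding the lemma. All products of the form $\overline{\tau}\,\bar{w}\,\overline{\gamma}$, as well as the $O(|w|^2)$ and $O(u^2)$ terms, vanish at $p$, so that only the first-order corrections $-\overline{\tau'_{r\bar{s}m}}\,w_m-\overline{\tau_{r\bar{s}m}}\,\bar{w}_m$ and $-\overline{\gamma_{r\bar{s}}}$ survive. These yield $-\overline{\tau'_{r\bar{s}i}}$, respectively $-\overline{\tau_{r\bar{s}i}}$, plus a $\partial\overline{\gamma_{r\bar{s}}}(p)=O(u)$ correction. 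There is no substantive obstacle; the only care required is the bookkeeping of the $(1,0)/(0,1)$ splitting at $p$, which is exactly what the almost holomorphic geodesic coordinates of Lemma \ref{ach} were constructed to supply.
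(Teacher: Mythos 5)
Your proposal is correct and follows essentially the same route as the paper: term-by-term differentiation of the recorded Taylor expansions of $h_{r\bar{s}}$ and $h^{r\bar{s}}$, with the only substantive point being that $\gamma_{i\bar{j}}$ is linear in $u$, so it and its derivatives contribute only $O(u)$ --- which is precisely the one observation the paper's (much terser) proof records. The extra bookkeeping you supply about the action of $\partial^{1,0}/\partial w_i$ and $\partial^{0,1}/\partial\bar{w}_i$ on $w_m$ and $\bar{w}_m$ at $p$ is accurate and harmless.
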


\begin{proof}
These equalities are a consequence of $\gamma_{i\bar{j}},$ and hence its derivatives with respect to $w_m$ and $\bar{w}_m,$ being of order $O(u).$ 
\end{proof}

\begin{lemma}\label{step}
\begin{equation*}
\begin{split}
\Big\langle dN^{0,2}_J(u),N_J \Big\rangle_{g_{J+u}} &=2\Big(h^{s\bar{m}}h^{j\bar{p}}h_{k\bar{q}} J^i_{\bar{j}} \frac{\partial ^{1,0} u^k_{\bar{s}}}{\partial w_i}+h^{i\bar{m}}h^{j\bar{p}}h_{k\bar{q}}\Big(\frac{\partial ^{0,1} u^s_{\bar{j}}}{\partial \bar{w}_i}J^k_s +\frac{\partial ^{0,1} u^{\bar{s}}_{\bar{j}}}{\partial \bar{w}_i}J^k_{\bar{s}}\Big)+\\
& h^{s\bar{m}}h^{j\bar{p}}h_{k\bar{q}} \frac{\partial ^{0,1} u^k_{\bar{s}}}{\partial \bar{w}_i}J^{\bar{i}}_{\bar{j}}\Big)\overline{N^q_{\bar{m}\bar{p}}}-2 h^{s\bar{m}}h^{j\bar{p}}h_{k\bar{q}} \Big(u^i_{\bar{s}} \frac{\partial ^{1,0} J^k_{\bar{j}}}{\partial w_i}+u^{\bar{i}}_{\bar{s}} \frac{\partial ^{0,1} J^k_{\bar{j}}}{\partial \bar{w}_i}\Big)\overline{N^q_{\bar{m}\bar{p}}}+\\
&2 h^{i\bar{m}}h^{j\bar{p}}h_{k\bar{q}}\Big\langle u\Big[\frac{\partial^{0,1}}{\partial \bar{w}_i},J\frac{\partial^{0,1}}{\partial \bar{w}_j}\Big],dw_k^{1,0}\Big\rangle \overline{N^q_{\bar{m}\bar{p}}}.
\end{split}
\end{equation*}
\end{lemma}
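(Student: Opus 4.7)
The plan is to compute $\langle dN^{0,2}_J(u), N_J \rangle_{g_{J+u}}$ directly in the almost holomorphic geodesic coordinates $(w_k)$ by differentiating the Nijenhuis tensor along $J_t = J + tu$ and then contracting the result with $N_J$ via the perturbed metric $h$. Concretely, I would differentiate the identity
\[N_J(X,Y) = [JX, JY] - J[JX, Y] - J[X, JY] - [X, Y]\]
at $t = 0$, obtaining the six-term expression
\[d_J N_J(u)(X,Y) = [uX, JY] + [JX, uY] - u[JX, Y] - u[X, JY] - J[uX, Y] - J[X, uY],\]
and then evaluate at $X = \partial^{0,1}/\partial \bar w_i$, $Y = \partial^{0,1}/\partial \bar w_j$ before projecting onto $T^{1,0}$ to isolate $dN^{0,2}_J(u)^k_{\bar i \bar j}$.

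Since $u$ anti-commutes with $J$, it maps $(0,1)$-vectors to $(1,0)$-vectors, so $u(\partial^{0,1}/\partial \bar w_s) = u^k_{\bar s}\, \partial^{1,0}/\partial w_k$. Expanding each Lie bracket in the frame $\{\partial^{1,0}/\partial w_k,\,\partial^{0,1}/\partial \bar w_k\}$ produces honest derivatives of the coefficients $u^k_{\bar s}$ (these give the four $\partial u$-type contributions, with the global factor of $2$ arising from the antisymmetrization in $\bar i, \bar j$ against the wedge product structure of $N_J$), together with derivatives of $J$ coming from the $J$-dependent projections embedded in the frame (these give the two $u \cdot \partial J$-type contributions with the minus sign). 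The piece $u\big[\partial^{0,1}/\partial \bar w_i,\, J \partial^{0,1}/\partial \bar w_j\big]$ that is not reducible to a single-derivative expression in $u$ alone accounts for the final summand. I would then pair with $N_J$ using the Hermitian inner product induced by $h$, which raises the two anti-holomorphic indices via $h^{\cdot \bar \cdot}$ and lowers the $T^{1,0}$-index via $h_{k \bar q}$ against $\overline{N^q_{\bar m \bar p}}$, so that the factors $h^{s\bar m}, h^{i\bar m}, h^{j\bar p}, h_{k\bar q}$ appear exactly as in the statement.

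The main obstacle is the careful bookkeeping of the type decomposition, since the projections $\partial^{1,0}/\partial w_k$ and $\partial^{0,1}/\partial \bar w_k$ vary from point to point, so that Lie brackets of projected frame vectors pick up nontrivial contributions from the variation of the projection itself, and these must be identified with derivatives of $J$ in the ambient coordinate frame rather than being discarded. In particular, the appearance of both $u^i_{\bar s}$ and $u^{\bar i}_{\bar s}$ in the derivative-of-$u$ group reflects the decomposition of $uX$ in the ambient frame $\{\partial/\partial w_k, \partial/\partial \bar w_k\}$ as opposed to the projected frame, which is where one has to be most careful not to conflate the two.
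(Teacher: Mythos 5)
Your proposal follows essentially the same route as the paper: differentiate the Nijenhuis bracket formula to get the six-term expression for $d_J N_J(u)$, evaluate it on the $(0,1)$ coordinate frame, extract the factor of $2$ by antisymmetrizing the brackets against $\overline{N^q_{\bar{m}\bar{p}}}$, and contract with $h^{i\bar{m}}h^{j\bar{p}}h_{k\bar{q}}$, with the $u\cdot\partial J$ terms and the residual $u\big[\partial^{0,1}/\partial\bar{w}_i, J\partial^{0,1}/\partial\bar{w}_j\big]$ term accounted for exactly as in the paper. The one discrepancy is a global sign: with the convention $N_J(X,Y)=[JX,JY]-J[JX,Y]-J[X,JY]-[X,Y]$ that you state, your derivative is the negative of the six-term formula $dN_J(u)(\zeta,\eta)=u[\zeta,J\eta]+J[\zeta,u\eta]+u[J\zeta,\eta]+J[u\zeta,\eta]-[u\zeta,J\eta]-[J\zeta,u\eta]$ the paper starts from, so to land on the stated right-hand side you would need the opposite sign convention for $N_J$ (which the paper implicitly uses without declaring it).
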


\begin{proof}
First, note that 
\[dN_J(u)(\zeta,\eta)=u[\zeta,J\eta]+J[\zeta,u\eta]+u[J\zeta,\eta]+J[u\zeta,\eta]-[u\zeta,J\eta]-[J\zeta,u\eta].\]

Since $u=u^s_r dw_r^{1,0} \otimes \frac{\partial^{1,0}}{\partial w_s}+u^{\bar{s}}_r dw_r^{1,0} \otimes \frac{\partial^{0,1}}{\partial \bar{w}_s}+u^s_{\bar{r}} d\bar{w}_r^{0,1} \otimes \frac{\partial^{1,0}}{\partial w_s}+u^{\bar{s}}_{\bar{r}} d\bar{w}_r^{0,1} \otimes \frac{\partial^{0,1}}{\partial \bar{w}_s},$ and $J=J^v_t dw_t^{1,0} \otimes \frac{\partial^{1,0}}{\partial w_v}+u^{\bar{v}}_t dw_t^{1,0} \otimes \frac{\partial^{0,1}}{\partial \bar{w}_v}+u^v_{\bar{t}} d\bar{w}_t^{0,1} \otimes \frac{\partial^{1,0}}{\partial w_v}+u^{\bar{v}}_{\bar{t}} d\bar{w}_t^{0,1} \otimes \frac{\partial^{0,1}}{\partial \bar{w}_v},$ it follows that
\begin{equation*}
\begin{split}
J\Big[\frac{\partial^{0,1}}{\partial \bar{w}_i},u\frac{\partial^{0,1}}{\partial \bar{w}_j}\Big]&=\Big(\frac{\partial^{0,1} u^s_{\bar{j}}}{\partial \bar{w}_i} J^v_s+\frac{\partial^{0,1} u^{\bar{s}}_{\bar{j}}}{\partial \bar{w}_i} J^v_{\bar{s}}\Big)\frac{\partial^{1,0}}{\partial w_v}+\Big(\frac{\partial^{0,1} u^s_{\bar{j}}}{\partial \bar{w}_i} J^{\bar{v}}_s+\frac{\partial^{0,1} u^{\bar{s}}_{\bar{j}}}{\partial \bar{w}_i} J^{\bar{v}}_{\bar{s}}\Big)\frac{\partial^{0,1}}{\partial \bar{w}_v},
\end{split}
\end{equation*}
and
\begin{equation*}
\begin{split}
u\Big[\frac{\partial^{0,1}}{\partial \bar{w}_i},J\frac{\partial^{0,1}}{\partial \bar{w}_j}\Big]&=\Big(\frac{\partial^{0,1} J^s_{\bar{j}}}{\partial \bar{w}_i} u^v_s+\frac{\partial^{0,1} J^{\bar{s}}_{\bar{j}}}{\partial \bar{w}_i} u^v_{\bar{s}}\Big)\frac{\partial^{1,0}}{\partial w_v}+\Big(\frac{\partial^{0,1} J^s_{\bar{j}}}{\partial \bar{w}_i} u^{\bar{v}}_s+\frac{\partial^{0,1} J^{\bar{s}}_{\bar{j}}}{\partial \bar{w}_i} u^{\bar{v}}_{\bar{s}}\Big)\frac{\partial^{0,1}}{\partial \bar{w}_v}.
\end{split}
\end{equation*}

A similar computation shows that 
\begin{equation*}
\begin{split}
\Big[J \frac{\partial^{0,1}}{\partial \bar{w}_j},u\frac{\partial^{0,1}}{\partial \bar{w}_i}\Big]&=\Big(J^v_{\bar{j}} \frac{\partial^{1,0} u^s_{\bar{i}}}{\partial w_v}+J^{\bar{v}}_{\bar{j}}\frac{\partial^{0,1} u^s_{\bar{i}}}{\partial \bar{w}_v}-u^v_{\bar{i}} \frac{\partial^{1,0} J^s_{\bar{j}}}{\partial w_v}-u^{\bar{v}}_{\bar{i}} \frac{\partial^{0,1} J^s_{\bar{j}}}{\partial \bar{w}_v}\Big)\frac{\partial^{1,0}}{\partial w_s}+\\
&\Big(J^v_{\bar{j}} \frac{\partial^{1,0} u^{\bar{s}}_{\bar{i}}}{\partial w_v}+J^v_{\bar{j}}\frac{\partial^{0,1} u^{\bar{s}}_{\bar{i}}}{\partial \bar{w}_v}-u^v_{\bar{i}} \frac{\partial^{1,0} J^{\bar{s}}_{\bar{j}}}{\partial w_v}-u^{\bar{v}}_{\bar{i}} \frac{\partial^{0,1} J^{\bar{s}}_{\bar{j}}}{\partial \bar{w}_v}\Big)\frac{\partial^{0,1}}{\partial \bar{w}_s}.
\end{split}
\end{equation*}

Therefore,
\begin{equation*}
\begin{split}
\Big\langle J\Big[\frac{\partial^{0,1}}{\partial \bar{w}_i},u\frac{\partial^{0,1}}{\partial \bar{w}_j}\Big]+\Big[J \frac{\partial^{0,1}}{\partial \bar{w}_j},u\frac{\partial^{0,1}}{\partial \bar{w}_i}\Big],dw_k^{1,0}\Big\rangle&=\Big(\frac{\partial^{0,1} u^s_{\bar{j}}}{\partial \bar{w}_i} J^k_s+\frac{\partial^{0,1} u^{\bar{s}}_{\bar{j}}}{\partial \bar{w}_i} J^k_{\bar{s}}\Big)+\\
&\Big(J^s_{\bar{j}} \frac{\partial^{1,0} u^k_{\bar{i}}}{\partial w_s}+J^{\bar{s}}_{\bar{j}}\frac{\partial^{0,1} u^k_{\bar{i}}}{\partial \bar{w}_s}-u^s_{\bar{i}} \frac{\partial^{1,0} J^k_{\bar{j}}}{\partial w_s}-u^{\bar{s}}_{\bar{i}} \frac{\partial^{0,1} J^k_{\bar{j}}}{\partial \bar{w}_s}\Big),
\end{split}
\end{equation*}

and so 
\begin{align*}
\begin{split}
\Big\langle dN^{0,2}_J(u),N_J \Big\rangle_{g_{J+u}} &= h^{i\bar{m}}h^{j\bar{p}}h_{k\bar{q}} \big(dN_J(u)\big)^k_{\bar{i}\bar{j}}  \overline{N^q_{\bar{m}\bar{p}}}\\
&= h^{i\bar{m}}h^{j\bar{p}}h_{k\bar{q}}\Big\langle u\Big[ \frac{\partial^{0,1}}{\partial \bar{w}_i},J\frac{\partial^{0,1}}{\partial \bar{w}_j}\Big]+J\Big[\frac{\partial^{0,1}}{\partial \bar{w}_i},u\frac{\partial^{0,1}}{\partial \bar{w}_j}\Big]+u\Big[J \frac{\partial^{0,1}}{\partial \bar{w}_i},\frac{\partial^{0,1}}{\partial \bar{w}_j}\Big]+\\
&J\Big[u\frac{\partial^{0,1}}{\partial \bar{w}_i},\frac{\partial^{0,1}}{\partial \bar{w}_j}\Big]-\Big[u\frac{\partial^{0,1}}{\partial \bar{w}_i},J\frac{\partial^{0,1}}{\partial \bar{w}_j}\Big]-\Big[J \frac{\partial^{0,1}}{\partial \bar{w}_i},u\frac{\partial^{0,1}}{\partial \bar{w}_j}\Big],dw_k^{1,0}\Big\rangle \overline{N^q_{\bar{m}\bar{p}}}\\
&= 2h^{i\bar{m}}h^{j\bar{p}}h_{k\bar{q}}\Big\langle J\Big[\frac{\partial^{0,1}}{\partial \bar{w}_i},u\frac{\partial^{0,1}}{\partial \bar{w}_j}\Big]+\Big[J \frac{\partial^{0,1}}{\partial \bar{w}_j},u\frac{\partial^{0,1}}{\partial \bar{w}_i}\Big],dw_k^{1,0}\Big\rangle \overline{N^q_{\bar{m}\bar{p}}}+\\
&2 h^{i\bar{m}}h^{\bar{j}p}h_{k\bar{q}}\Big\langle u\Big[\frac{\partial^{0,1}}{\partial \bar{w}_i},J\frac{\partial^{0,1}}{\partial \bar{w}_j}\Big],dw_k^{1,0}\Big\rangle \overline{N^q_{\bar{m}\bar{p}}}\\
&=2\Big(h^{s\bar{m}}h^{j\bar{p}}h_{k\bar{q}} J^i_{\bar{j}} \frac{\partial ^{1,0} u^k_{\bar{s}}}{\partial w_i}+h^{i\bar{m}}h^{j\bar{p}}h_{k\bar{q}}\Big(\frac{\partial ^{0,1} u^s_{\bar{j}}}{\partial \bar{w}_i}J^k_s +\frac{\partial ^{0,1} u^{\bar{s}}_{\bar{j}}}{\partial \bar{w}_i}J^k_{\bar{s}}\Big)+\\
& h^{s\bar{m}}h^{j\bar{p}}h_{k\bar{q}} \frac{\partial ^{0,1} u^k_{\bar{s}}}{\partial \bar{w}_i}J^{\bar{i}}_{\bar{j}}\Big)\overline{N^q_{\bar{m}\bar{p}}}-2 h^{s\bar{m}}h^{j\bar{p}}h_{k\bar{q}} \Big(u^i_{\bar{s}} \frac{\partial ^{1,0} J^k_{\bar{j}}}{\partial w_i}+u^{\bar{i}}_{\bar{s}} \frac{\partial ^{0,1} J^k_{\bar{j}}}{\partial \bar{w}_i}\Big)\overline{N^q_{\bar{m}\bar{p}}}+\\
&2 h^{i\bar{m}}h^{j\bar{p}}h_{k\bar{q}}\Big\langle u\Big[\frac{\partial^{0,1}}{\partial \bar{w}_i},J\frac{\partial^{0,1}}{\partial \bar{w}_j}\Big],dw_k^{1,0}\Big\rangle \overline{N^q_{\bar{m}\bar{p}}}.
\end{split}
\end{align*}
\end{proof}

\begin{lemma}\label{libp}
At $p,$ we have that
\begin{equation*}
\begin{split}
\frac{\partial^{1,0}}{\partial w_i}\Big[h^{s\bar{m}} h^{j\bar{p}} h_{k\bar{q}} J^i_{\bar{j}} \overline{N^q_{\bar{m}\bar{p}}} \det{(h)} \Big]u_{\bar{s}}^k &=\Big(-\overline{\tau'_{s\bar{m}i}} J^i_{\bar{j}} \overline{N^k_{\bar{m}\bar{j}}}-\overline{\tau'_{j\bar{p}i}} J^i_{\bar{j}} \overline{N^k_{\bar{s}\bar{p}}}+\tau_{k\bar{q}i} J^i_{\bar{j}} \overline{N^q_{\bar{s}\bar{j}}}+\\
&\frac{\partial^{1,0} J^i_{\bar{j}}}{\partial w_i} \overline{N^k_{\bar{s}\bar{j}}}+J^i_{\bar{j}} \frac{\partial^{1,0} \overline{N^k_{\bar{s}\bar{j}}}}{\partial w_i}+J^i_{\bar{j}} \overline{N^k_{\bar{s}\bar{j}}} \big(\sum_{c=1}^n \tau_{c\bar{c}i}\big)\Big) u^k_{\bar{s}},
\end{split}
\end{equation*}

\begin{equation*}
\begin{split}
\frac{\partial^{0,1}}{\partial \bar{w}_i}\Big[h^{i\bar{m}} h^{j\bar{p}} h_{k\bar{q}} J^k_s \overline{N^q_{\bar{m}\bar{p}}} \det{(h)} \Big]u_{\bar{j}}^s &=\Big(-\overline{\tau_{i\bar{m}i}} J^k_s \overline{N^k_{\bar{m}\bar{j}}}-\overline{\tau_{j\bar{p}i}} J^k_s \overline{N^k_{\bar{i}\bar{p}}}+\tau'_{k\bar{q}i} J^k_s \overline{N^q_{\bar{i}\bar{j}}}+\\
&\frac{\partial^{0,1} J^k_s}{\partial \bar{w}_i} \overline{N^k_{\bar{i}\bar{j}}}+J^k_s \frac{\partial^{0,1} \overline{N^k_{\bar{i}\bar{j}}}}{\partial \bar{w}_i}+J^k_s \overline{N^k_{\bar{i}\bar{j}}} \big(\sum_{c=1}^n \tau'_{c\bar{c}i}\big)\Big) u^s_{\bar{j}},
\end{split}
\end{equation*}

\begin{equation*}
\begin{split}
\frac{\partial^{0,1}}{\partial \bar{w}_i}\Big[h^{i\bar{m}} h^{j\bar{p}} h_{k\bar{q}} J^k_{\bar{s}} \overline{N^q_{\bar{m}\bar{p}}} \det{(h)} \Big]u_{\bar{j}}^{\bar{s}} &=\Big(-\overline{\tau_{i\bar{m}i}} J^k_{\bar{s}} \overline{N^k_{\bar{m}\bar{j}}}-\overline{\tau_{j\bar{p}i}} J^k_{\bar{s}} \overline{N^k_{\bar{i}\bar{p}}}+\tau'_{k\bar{q}i} J^k_{\bar{s}} \overline{N^q_{\bar{i}\bar{j}}}+\\
&\frac{\partial^{0,1} J^k_{\bar{s}}}{\partial \bar{w}_i} \overline{N^k_{\bar{i}\bar{j}}}+J^k_{\bar{s}} \frac{\partial^{0,1} \overline{N^k_{\bar{i}\bar{j}}}}{\partial \bar{w}_i}+J^k_{\bar{s}} \overline{N^k_{\bar{i}\bar{j}}} \big(\sum_{c=1}^n \tau'_{c\bar{c}i}\big)\Big) u^{\bar{s}}_{\bar{j}},
\end{split}
\end{equation*}

and 

\begin{equation*}
\begin{split}
\frac{\partial^{0,1}}{\partial \bar{w}_i}\Big[h^{s\bar{m}} h^{j\bar{p}} h_{k\bar{q}} J^{\bar{i}}_{\bar{j}} \overline{N^q_{\bar{m}\bar{p}}} \det{(h)} \Big]u_{\bar{s}}^k &=\Big(-\overline{\tau_{s\bar{m}i}} J^{\bar{i}}_{\bar{j}} \overline{N^k_{\bar{m}\bar{j}}}-\overline{\tau_{j\bar{p}i}} J^{\bar{i}}_{\bar{j}} \overline{N^k_{\bar{s}\bar{p}}}+\tau'_{k\bar{q}i} J^{\bar{i}}_{\bar{j}} \overline{N^q_{\bar{s}\bar{j}}}+\\
&\frac{\partial^{0,1} J^{\bar{i}}_{\bar{j}}}{\partial \bar{w}_i} \overline{N^k_{\bar{s}\bar{j}}}+J^{\bar{i}}_{\bar{j}}\frac{\partial^{0,1} \overline{N^k_{\bar{s}\bar{j}}}}{\partial \bar{w}_i}+J^{\bar{i}}_{\bar{j}} \overline{N^k_{\bar{s}\bar{j}}} \big(\sum_{c=1}^n \tau'_{c\bar{c}i}\big)\Big) u^k_{\bar{s}}.
\end{split}
\end{equation*}
\end{lemma}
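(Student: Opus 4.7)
The plan is to apply the Leibniz rule directly to the six-factor product
\[
P := h^{s\bar{m}} h^{j\bar{p}} h_{k\bar{q}} J^i_{\bar{j}} \overline{N^q_{\bar{m}\bar{p}}} \det(h)
\]
and then evaluate the result at $p$ using Lemma \ref{ach} and Lemma \ref{deri}. Expanding $\partial^{1,0}P/\partial w_i$ gives six summands, one for each factor bearing the derivative while the remaining five are evaluated at $p$.

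For the evaluation step I would use that in almost holomorphic geodesic coordinates, $h_{a\bar{b}}(p)=\delta_{ab}+O(u)$ and $h^{a\bar{b}}(p)=\delta_{ab}+O(u)$, so that $\det(h)(p)=1+O(u)$. The derivative of the determinant is handled by Jacobi's formula:
\[
\frac{\partial^{1,0}\det(h)}{\partial w_i}\Big|_p \;=\; \det(h)(p)\cdot\mbox{tr}\Big(h^{-1}\,\frac{\partial^{1,0} h}{\partial w_i}\Big)\Big|_p \;=\; \sum_{c=1}^n \tau_{c\bar{c}i}+O(u),
\]
by Lemma \ref{deri}. The $(1,0)$-derivatives of the metric and inverse-metric factors are read directly from Lemma \ref{deri}, producing $\tau$ from $\partial h_{a\bar b}$ and $-\overline{\tau'}$ from $\partial h^{a\bar b}$; the un-differentiated $h$-factors then collapse via Kronecker deltas, e.g.\ in the first summand the surviving $h^{j\bar{p}}h_{k\bar{q}}=\delta_{jp}\delta_{kq}$ forces $\overline{N^q_{\bar{m}\bar{p}}}\mapsto \overline{N^k_{\bar{m}\bar{j}}}$, matching the claimed pattern. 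The remaining two summands, arising from differentiation of $J^i_{\bar{j}}$ and of $\overline{N^q_{\bar{m}\bar{p}}}$, are kept intact since no comparable simplification is available for these factors.

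The remaining three identities follow from the same six-term expansion, with $\partial^{1,0}/\partial w_i$ replaced by $\partial^{0,1}/\partial\bar{w}_i$ and $J^i_{\bar j}$ replaced by the appropriate $J$-component ($J^k_s$, $J^k_{\bar s}$, or $J^{\bar i}_{\bar j}$). The swap from holomorphic to anti-holomorphic differentiation is precisely what exchanges $\tau$ with $\tau'$ via Lemma \ref{deri}, and this propagates through all six summands to produce the stated formulas.

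The main obstacle is not conceptual but organisational: with many summed indices and several Kronecker-delta contractions after setting $h=I$ at $p$, careful relabelling is needed to read off the free-index pattern on the right-hand side (note for example that in the third identity the free index $\bar{s}$ on $u^{\bar{s}}_{\bar{j}}$ must not be confused with the summed index $\bar{s}$ produced by the derivative factor $J^k_{\bar{s}}$). Once this bookkeeping is carried out cleanly for the first identity, the remaining three drop out mechanically.
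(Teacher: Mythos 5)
Your proposal is correct and follows essentially the same route as the paper: a six-term Leibniz expansion, evaluation at $p$ via $h_{a\bar{b}}(p)=\delta_{ab}+O(u)$ and Lemma \ref{deri}, and contraction of the resulting Kronecker deltas. The only cosmetic difference is that you invoke Jacobi's formula for $\partial^{1,0}\det(h)/\partial w_i$, whereas the paper differentiates the Taylor expansion of $\det(h)$ directly; both yield $\sum_{c=1}^n \tau_{c\bar{c}i}+O(u)$.
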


\begin{proof}
Note that $h_{i\bar{j}}(p)=\delta_{ij}+\gamma_{i\bar{j}}(p)=\delta_{ij}+O(u),$ and $h^{i\bar{j}} (p)=\delta_{ij}-\overline{\gamma_{i\bar{j}}}=\delta_{ij}+O(u).$ Then using Lemma \ref{deri}, \[\frac{\partial^{1,0} h^{s\bar{m}}}{\partial w_i} h^{j\bar{p}} h_{k\bar{q}}=-\overline{\tau'_{s\bar{m}i}} \delta_{jp} \delta_{kq} +O(u), \quad h^{s\bar{m}} \frac{\partial^{1,0} h^{j\bar{p}}}{\partial w_i} h_{k\bar{q}}=-\overline{\tau'_{j\bar{p}i}} \delta_{sm} \delta_{kq} +O(u),\] and \[h^{s\bar{m}} h^{j\bar{p}} \frac{\partial^{1,0} h_{k\bar{q}}}{\partial w_i}=\tau_{k\bar{q}i} \delta_{sm} \delta_{jp} +O(u).\] Now since $\det{h(p)}=1+\sum_{c=1}^n \gamma_{c\bar{c}}=1+O(u),$ 
\begin{equation}\label{op}
\begin{split}
\frac{\partial^{1,0} h^{s\bar{m}} h^{j\bar{p}} h_{k\bar{q}}}{\partial w_i} J^i_{\bar{j}} \overline{N^q_{\bar{m}\bar{p}}} \det{(h)} u^k_{\bar{s}}&=\Big(\frac{\partial^{1,0} h^{s\bar{m}}}{\partial w_i} h^{j\bar{p}} h_{k\bar{q}} + h^{s\bar{m}} \frac{\partial^{1,0} h^{j\bar{p}}}{\partial w_i}h_{k\bar{q}} +\\
&h^{s\bar{m}} h^{j\bar{p}} \frac{\partial^{1,0} h_{k\bar{q}}}{\partial w_i}  \Big)J^i_{\bar{j}} \overline{N^q_{\bar{m}\bar{p}}} \det{(h)} u^k_{\bar{s}}\\
&=\big(-\overline{\tau'_{s\bar{m}i}} \delta_{jp} \delta_{kq}-\overline{\tau'_{j\bar{p}i}} \delta_{sm} \delta_{kq}+\tau_{k\bar{q}i} \delta_{sm} \delta_{jp}\big)J^i_{\bar{j}} \overline{N^q_{\bar{m}\bar{p}}} u^k_{\bar{s}}.
\end{split}
\end{equation}
Moreover, $h^{s\bar{m}} h^{j\bar{p}} h_{k\bar{q}} (p)= \delta_{sm}\delta_{jp} \delta_{kq} +O(u),$ and since \[\det{(h)}=1+\sum_{c,m=1}^n (\tau_{c\bar{c}m} w_m + \tau'_{c\bar{c}m} \bar{w}_m) + \sum_{c=1}^n \gamma_{c\bar{c}} +O(|w|^2),\] \[\frac{\partial^{1,0} \det{(h)}}{\partial w_i} (p)=\sum_{c=1}^n \tau_{c\bar{c}i} +\sum_{c=1}^n \frac{\partial^{1,0} \gamma_{c\bar{c}}}{w_i}(p)=\sum_{c=1}^n \tau_{c\bar{c}i} +O(u),\] implying that
\begin{equation}\label{opp}
\begin{split}
h^{s\bar{m}} h^{j\bar{p}} h_{k\bar{q}} \frac{\partial^{1,0}}{\partial w_i} \Big(J^i_{\bar{j}} \overline{N^q_{\bar{m}\bar{p}}} \det{(h)}\big)u^k_{\bar{s}}&=h^{s\bar{m}} h^{j\bar{p}} h_{k\bar{q}} \Big(\frac{\partial^{1,0} J^i_{\bar{j}}}{\partial w_i}\overline{N^q_{\bar{m}\bar{p}}} \det{(h)} +\\
&J^i_{\bar{j}} \frac{\partial^{1,0} \overline{N^q_{\bar{m}\bar{p}}}}{\partial w_i}\det{(h)}+J^i_{\bar{j}} \overline{N^q_{\bar{m}\bar{p}}} \frac{\partial^{1,0} \det{(h)}}{\partial w_i}\Big)u^k_{\bar{s}}\\
&=\delta_{sm} \delta_{jp} \delta_{kq} \Big(\frac{\partial^{1,0} J^i_{\bar{j}}}{\partial w_i}\overline{N^q_{\bar{m}\bar{p}}}+J^i_{\bar{j}} \frac{\partial^{1,0} \overline{N^q_{\bar{m}\bar{p}}}}{\partial w_i}+J^i_{\bar{j}} \overline{N^q_{\bar{m}\bar{p}}} \big(\sum_{c=1}^n \tau_{c\bar{c}i}\big)\Big)u^k_{\bar{s}}.
\end{split}
\end{equation}

Now, using equations \ref{op} and \ref{opp}, we see that 

\begin{equation*}
\begin{split}
\frac{\partial^{1,0}}{\partial w_i}\Big[h^{s\bar{m}} h^{j\bar{p}} h_{k\bar{q}} J^i_{\bar{j}} \overline{N^q_{\bar{m}\bar{p}}} \det{(h)} \Big]u_{\bar{s}}^k &=\Big[\big(-\overline{\tau'_{s\bar{m}i}} \delta_{jp} \delta_{kq}-\overline{\tau'_{j\bar{p}i}} \delta_{sm} \delta_{kq}+\tau_{k\bar{q}i} \delta_{sm} \delta_{jp}\big)J^i_{\bar{j}} \overline{N^q_{\bar{m}\bar{p}}}+\\
&\delta_{sm} \delta_{jp} \delta_{kq} \Big(\frac{\partial^{1,0} J^i_{\bar{j}}}{\partial w_i} \overline{N^q_{\bar{m}\bar{p}}}+J^i_{\bar{j}} \frac{\partial^{1,0} \overline{N^q_{\bar{m}\bar{p}}}}{\partial w_i}+J^i_{\bar{j}} \overline{N^q_{\bar{m}\bar{p}}} \big(\sum_{c=1}^n \tau_{c\bar{c}i}\big)\Big) \Big]u^k_{\bar{s}}\\
&=\Big(-\overline{\tau'_{s\bar{m}i}} J^i_{\bar{j}} \overline{N^k_{\bar{m}\bar{j}}}-\overline{\tau'_{j\bar{p}i}} J^i_{\bar{j}} \overline{N^k_{\bar{s}\bar{p}}}+\tau_{k\bar{q}i} J^i_{\bar{j}} \overline{N^q_{\bar{s}\bar{j}}}+\\
&\frac{\partial^{1,0} J^i_{\bar{j}}}{\partial w_i} \overline{N^k_{\bar{s}\bar{j}}}+J^i_{\bar{j}} \frac{\partial^{1,0} \overline{N^k_{\bar{s}\bar{j}}}}{\partial w_i}+J^i_{\bar{j}} \overline{N^k_{\bar{s}\bar{j}}} \big(\sum_{c=1}^n \tau_{c\bar{c}i}\big)\Big) u^k_{\bar{s}}.
\end{split}
\end{equation*}

Again, using Lemma \ref{deri}, we find that 

\begin{equation*}
\begin{split}
\frac{\partial^{0,1} h^{i\bar{m}} h^{j\bar{p}} h_{k\bar{q}}}{\partial \bar{w}_i} J^k_s \overline{N^q_{\bar{m}\bar{p}}} \det{(h)} u^s_{\bar{j}}&=\Big(\frac{\partial^{0,1} h^{i\bar{m}}}{\partial \bar{w}_i} h^{j\bar{p}} h_{k\bar{q}} + h^{i\bar{m}} \frac{\partial^{0,1} h^{j\bar{p}}}{\partial \bar{w}_i}h_{k\bar{q}} +\\
&h^{i\bar{m}} h^{j\bar{p}} \frac{\partial^{0,1} h_{k\bar{q}}}{\partial \bar{w}_i}  \Big)J^k_s \overline{N^q_{\bar{m}\bar{p}}} \det{(h)} u^s_{\bar{j}}\\
&=\big(-\overline{\tau_{i\bar{m}i}} \delta_{jp} \delta_{kq}-\overline{\tau_{j\bar{p}i}} \delta_{im} \delta_{kq}+\tau'_{k\bar{q}i} \delta_{im} \delta_{jp}\big)J^k_s \overline{N^q_{\bar{m}\bar{p}}} u^s_{\bar{j}}.
\end{split}
\end{equation*}

Since \[\frac{\partial^{0,1} \det{(h)}}{\partial \bar{w}_i} (p)=\sum_{c=1}^n \tau'_{c\bar{c}i}+O(u),\]
\begin{equation}
\begin{split}
h^{i\bar{m}} h^{j\bar{p}} h_{k\bar{q}} \frac{\partial^{0,1}}{\partial \bar{w}_i} \Big(J^k_s \overline{N^q_{\bar{m}\bar{p}}} \det{(h)}\Big)u^s_{\bar{j}}&=h^{i\bar{m}} h^{j\bar{p}} h_{k\bar{q}} \Big(\frac{\partial^{0,1} J^k_s}{\partial \bar{w}_i}\overline{N^q_{\bar{m}\bar{p}}} \det{(h)} +\\
&J^k_s \frac{\partial^{0,1} \overline{N^q_{\bar{m}\bar{p}}}}{\partial \bar{w}_i}\det{(h)}+J^k_s \overline{N^q_{\bar{m}\bar{p}}} \frac{\partial^{0,1} \det{(h)}}{\partial \bar{w}_i}\Big)u^s_{\bar{j}}\\
&=\delta_{im} \delta_{jp} \delta_{kq} \Big(\frac{\partial^{0,1} J^k_s}{\partial \bar{w}_i}\overline{N^q_{\bar{m}\bar{p}}}+J^k_s \frac{\partial^{0,1} \overline{N^q_{\bar{m}\bar{p}}}}{\partial \bar{w}_i}+J^k_s \overline{N^q_{\bar{m}\bar{p}}} \big(\sum_{c=1}^n \tau'_{c\bar{c}i}\big)\Big)u^s_{\bar{j}},
\end{split}
\end{equation}
confirming that 

\begin{equation*}
\begin{split}
\frac{\partial^{0,1}}{\partial \bar{w}_i}\Big[h^{i\bar{m}} h^{j\bar{p}} h_{k\bar{q}} J^k_s \overline{N^q_{\bar{m}\bar{p}}} \det{(h)} \Big]u_{\bar{j}}^s &=\Big[\big(-\overline{\tau_{i\bar{m}i}} \delta_{jp} \delta_{kq}-\overline{\tau_{j\bar{p}i}} \delta_{im} \delta_{kq}+\tau'_{k\bar{q}i} \delta_{im} \delta_{jp}\big)J^k_s \overline{N^q_{\bar{m}\bar{p}}}+\\
&\delta_{im} \delta_{jp} \delta_{kq} \Big(\frac{\partial^{0,1} J^k_s}{\partial \bar{w}_i} \overline{N^q_{\bar{m}\bar{p}}}+J^k_s \frac{\partial^{0,1} \overline{N^q_{\bar{m}\bar{p}}}}{\partial \bar{w}_i}+J^k_s \overline{N^q_{\bar{m}\bar{p}}} \big(\sum_{c=1}^n \tau'_{c\bar{c}i}\big)\Big) \Big]u^s_{\bar{j}}\\
&=\Big(-\overline{\tau_{i\bar{m}i}} J^k_s \overline{N^k_{\bar{m}\bar{j}}}-\overline{\tau_{j\bar{p}i}} J^k_s \overline{N^k_{\bar{i}\bar{p}}}+\tau'_{k\bar{q}i} J^k_s \overline{N^q_{\bar{i}\bar{j}}}+\\
&\frac{\partial^{0,1} J^k_s}{\partial \bar{w}_i} \overline{N^k_{\bar{i}\bar{j}}}+J^k_s \frac{\partial^{0,1} \overline{N^k_{\bar{i}\bar{j}}}}{\partial \bar{w}_i}+J^k_s \overline{N^k_{\bar{i}\bar{j}}} \big(\sum_{c=1}^n \tau'_{c\bar{c}i}\big)\Big) u^s_{\bar{j}}.
\end{split}
\end{equation*}
\end{proof}

\begin{lemma}\label{gam}
Let $g'_{ij}=\frac{1}{2}g\big(\frac{\partial^{1,0}}{\partial w_i},J\frac{\partial^{1,0}}{\partial w_j}\big),$ $g'_{\bar{i}j}=\frac{1}{2}g\big(\frac{\partial^{0,1}}{\partial \bar{w}_i},J\frac{\partial^{1,0}}{\partial w_j}\big),$ and so forth. Then,
\begin{equation}\label{eq10}
\begin{split}
d_{g_J}\big(\|N\|^2_{g_{J+u}} \vol_{g_{J+u}}\big)\cdot \gamma \big (p)&=-\Big(u^k_s \big(2g'_{k\bar{j}} N^v_{\bar{i}\bar{j}} \overline{N^v_{\bar{i}\bar{s}}}-g'_{k\bar{m}} N^s_{\bar{i}\bar{j}} \overline{N^m_{\bar{i}\bar{j}}}\big)+u^{\bar{k}}_s \big(2g'_{\bar{k}\bar{j}} N^v_{\bar{i}\bar{j}} \overline{N^v_{\bar{i}\bar{s}}}-g'_{\bar{k}\bar{m}} N^s_{\bar{i}\bar{j}} \overline{N^m_{\bar{i}\bar{j}}}\big)+\\
&u^k_{\bar{s}} \big(2g'_{km} N^v_{\bar{i}\bar{s}} \overline{N^v_{\bar{i}\bar{m}}}-g'_{kv} N^v_{\bar{i}\bar{j}} \overline{N^s_{\bar{i}\bar{j}}}\big)+u^{\bar{k}}_{\bar{s}} \big(2g'_{\bar{k}m} N^v_{\bar{i}\bar{s}} \overline{N^v_{\bar{i}\bar{m}}}-g'_{\bar{k}v} N^v_{\bar{i}\bar{j}} \overline{N^s_{\bar{i}\bar{j}}}\big)\Big)dV,
\end{split}
\end{equation}
and 
\begin{equation}\label{eq11}
\begin{split}
d_{\vol} \big(\|N_J\|^2 \vol_{g_J} \big) \cdot d_J (\vol_{g_J})(u)(p)&=\Big(\big(u^k_s g'_{k\bar{s}}+u^{\bar{k}}_s g'_{\bar{k}\bar{s}} + u^k_{\bar{s}} g'_{ks}+u^{\bar{k}}_{\bar{s}} g'_{\bar{k}s} \big)|N^v_{\bar{i}\bar{j}}|^2\Big)dV
\end{split}
\end{equation}
\end{lemma}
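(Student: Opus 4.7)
The plan is to compute both variations directly from the local coordinate expression
\[
\|N\|^2_h\,\vol_h \;=\; h^{i\bar m}h^{j\bar p}h_{k\bar q}\,N^k_{\bar i\bar j}\,\overline{N^q_{\bar m\bar p}}\,\det(h)\,dV,
\]
splitting the two equations according to which slot of $\Phi$ is being differentiated. In \eqref{eq10} the $\vol$-slot is treated as frozen (its variation is accounted for separately by \eqref{eq11}), so only the $\|N\|^2_h$-factor is varied in the direction $\delta h=\gamma$, and the remaining $\vol_h(p)=(1+O(u))\,dV$ contributes the $dV$ in the answer.

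Applying the product rule to $\|N\|^2_h$ yields three contributions: from the two inverse-metric factors $h^{i\bar m},h^{j\bar p}$ and from the covariant factor $h_{k\bar q}$. At $p$, where $h_{a\bar b}(p)=\delta_{ab}+O(u)$, one has $\delta h_{k\bar q}=\gamma_{k\bar q}$, and from the convention $h^{a\bar b}:=\overline{(h^{-1})_{ab}}$ together with the Hermitian property of $h$, one finds $\delta h^{a\bar b}|_p=-\gamma_{b\bar a}$. Because $h^{i\bar m}h^{j\bar p}N^k_{\bar i\bar j}\,\overline{N^q_{\bar m\bar p}}$ is symmetric under the swap $(i,m)\leftrightarrow(j,p)$---the two minus signs from the antisymmetry of $N$ in its lower indices cancel---the two inverse-metric variations contribute equally, which is the source of the factor $2$ in front of the $g'_{k\bar j}$-term.

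The translation from $\gamma$ to $u$ uses $\gamma=\frac{1}{2}\bigl(g(u\cdot,J\cdot)+g(J\cdot,u\cdot)\bigr)$: expanding $u$ on the coordinate frame and using $J\frac{\partial^{1,0}}{\partial w_k}=i\frac{\partial^{1,0}}{\partial w_k}$, $J\frac{\partial^{0,1}}{\partial \bar w_k}=-i\frac{\partial^{0,1}}{\partial \bar w_k}$, each half-metric evaluation $\tfrac12 g(\cdot,J\cdot)$ becomes a $g'$-entry. After invoking the identities $g'_{rk}=g'_{kr}$ and $g'_{r\bar k}=-g'_{\bar k r}$ (both direct consequences of the symmetry of $g$), one obtains
\[
\gamma_{r\bar s}=u^k_r\,g'_{k\bar s}+u^{\bar k}_r\,g'_{\bar k\bar s}+u^k_{\bar s}\,g'_{kr}+u^{\bar k}_{\bar s}\,g'_{\bar k r}.
\]
Substituting this into the three variation contributions, relabeling dummy indices, and collecting by the four $u$-components produces \eqref{eq10}; the overall minus sign traces back to $\delta h^{a\bar b}=-\gamma_{b\bar a}$, while the $+g'_{k\bar m}$ piece preserves the sign of $\delta h_{k\bar q}=+\gamma_{k\bar q}$.

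For \eqref{eq11}, since $\Phi(N,h,\vol)$ is linear in its volume slot, $d_{\vol}\Phi=\|N\|^2_h$, which at $p$ evaluates to $|N^v_{\bar i\bar j}|^2$. Computing $d_J(\vol_{g_J})(u)$ from $\vol_{g_J}=\det((g_J)_{a\bar b})\,dV$ gives $d_J(\vol_{g_J})(u)=\mathrm{tr}\bigl(g_J^{-1}\gamma\bigr)\vol_{g_J}$, which at $p$ collapses to $\sum_c \gamma_{c\bar c}\,dV$; setting $r=s=c$ in the formula for $\gamma_{r\bar s}$ and summing over $c$ produces precisely the bracket in \eqref{eq11}. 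The main obstacle will be the sign and conjugation bookkeeping in $\delta h^{a\bar b}$---the paper's convention $h^{a\bar b}=\overline{(h^{-1})_{ab}}$ inserts a complex conjugation absent from $(h^{-1})_{ab}$---alongside the asymmetry $g'_{r\bar k}=-g'_{\bar k r}$; both must be tracked precisely to make each of the four $u$-components land with its correct $g'$-coefficient and index ordering.
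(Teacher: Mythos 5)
Your proposal is correct and follows essentially the same route as the paper: vary the three metric factors of $\|N\|^2_h$ (picking up $-\overline{\gamma}=-\gamma_{b\bar a}$ from each inverse factor and $+\gamma$ from the covariant one, with the two inverse-factor contributions merging into the factor $2$ by the antisymmetry of $N$ in its lower indices), substitute $\gamma_{r\bar s}=u^k_r g'_{k\bar s}+u^{\bar k}_r g'_{\bar k\bar s}+u^k_{\bar s}g'_{kr}+u^{\bar k}_{\bar s}g'_{\bar k r}$ and relabel, and obtain \eqref{eq11} from $d_J(\vol_{g_J})(u)=\mathrm{tr}(\gamma)\,dV$ at $p$. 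The only cosmetic difference is that you phrase the inverse-metric variation via Hermitian symmetry of $\gamma$ where the paper writes $-\overline{\gamma_{i\bar j}}$ directly; the substance is identical.
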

\begin{proof}
First note that
\begin{equation*}
\begin{split}
d_{g_J}\big(\|N\|^2_{g_{J+u}} \vol_{g_{J+u}}\big)\cdot \gamma \big (p)&=\big(\|N_J\|^2_{I+\gamma}-\|N_J\|^2_{I}\big)\big(1+tr(\gamma)\big)dV\\
&=-\big(\overline{\gamma_{j\bar{m}}} N^k_{\bar{i}\bar{j}} \overline{N^k_{\bar{i}\bar{m}}}+\overline{\gamma_{i\bar{m}}} N^k_{\bar{i}\bar{j}} \overline{N^k_{\bar{m}\bar{j}}}-\gamma_{k\bar{m}} N^k_{\bar{i}\bar{j}} \overline{N^m_{\bar{i}\bar{j}}} \big)dV\\
&=-\big(2\overline{\gamma_{j\bar{m}}} N^k_{\bar{i}\bar{j}} \overline{N^k_{\bar{i}\bar{m}}}-\gamma_{k\bar{m}} N^k_{\bar{i}\bar{j}} \overline{N^m_{\bar{i}\bar{j}}} \big)dV,
\end{split}
\end{equation*}

and that

\begin{equation*}
\begin{split}
d_{\vol} \big(\|N_J\|^2 \vol_{g_J} \big) \cdot d_J (\vol_{g_J})(u)\big (p)&=\|N_J\|^2_I \big(\vol_{I+\gamma}-\vol_I\big)\\
&=|N^k_{\bar{i}\bar{j}}|^2 tr(\gamma) dV.
\end{split}
\end{equation*}
Also, $\gamma_{k\bar{m}}=u^v_k g'_{v\bar{m}}+u^{\bar{v}}_k g'_{\bar{v}\bar{m}} + u^v_{\bar{m}} g'_{vk}+u^{\bar{v}}_{\bar{m}} g'_{\bar{v}k}.$ Equation \ref{eq10} is obtained by a relabeling of indices in $\gamma_{m\bar{j}},$ $\gamma_{m\bar{i}},$ and $\gamma_{k\bar{m}}$ so that the upper index of $u$ is $k$ (or $\bar{k}$) and the lower index is $s$ (or $\bar{s}$), and collecting terms with the same $u$-coefficients. Equation \ref{eq11} follows after writing $tr(\gamma)=\sum_{s=1}^n \gamma_{s\bar{s}}=u_s^v g'_{v\bar{s}} + u^{\bar{v}}_s g'_{\bar{v}\bar{s}} + u^v_{\bar{s}} g'_{vs} + u^{\bar{v}}_{\bar{s}} g'_{\bar{v}s},$ and a similar relabelling of indices.
\end{proof}
\begin{proposition}
Suppose that $u$ is compactly supported in $U_p.$ Let $1 \leq p,q \leq n.$ The Euler-Lagrange system of equations of $\widetilde{\mathcal{N}}$ at $p$ is 

\[\widetilde{\mathcal{T}}^q_p:=4\frac{\partial^{0,1} J^p_{\bar{j}}}{\partial \bar{w}_i}\overline{N^q_{\bar{i}\bar{j}}}-\big(2g'_{q\bar{j}} N^v_{\bar{i}\bar{j}} \overline{N^v_{\bar{i}\bar{p}}}-g'_{q\bar{m}} N^p_{\bar{i}\bar{j}} \overline{N^m_{\bar{i}\bar{j}}}\big)+g'_{q\bar{p}} |N^v_{\bar{i}\bar{j}}|^2=0,\]

\[\widetilde{\mathcal{T}}^{\bar{q}}_p:=-\big(2g'_{\bar{q}\bar{j}} N^v_{\bar{i}\bar{j}} \overline{N^v_{\bar{i}\bar{p}}}-g'_{\bar{q}\bar{m}} N^p_{\bar{i}\bar{j}} \overline{N^m_{\bar{i}\bar{j}}}\big)+g'_{\bar{q}\bar{p}} |N^v_{\bar{i}\bar{j}}|^2=0,\]

\begin{equation*}
\begin{split}
&\widetilde{\mathcal{T}}^q_{\bar{p}}:=4\Big[\Big(J\Big(\frac{\partial^{0,1}}{\partial \bar{w}_j}\Big)\omega_{m\bar{p}}\Big)\overline{N^q_{\bar{m}\bar{j}}}+\Big(J\Big(\frac{\partial^{0,1}}{\partial \bar{w}_j}\Big)\omega_{m\bar{j}}\Big)\overline{N^q_{\bar{p}\bar{m}}}-\Big(J\Big(\frac{\partial^{0,1}}{\partial \bar{w}_j}\Big)\omega_{q\bar{m}}\Big)\overline{N^m_{\bar{p}\bar{j}}}-\frac{\partial^{1,0}}{\partial w_i} \big(J^i_{\bar{j}}\overline{N^q_{\bar{p}\bar{j}}}\big)\\
&-\frac{\partial^{0,1}}{\partial \bar{w}_i}\big(J^{\bar{i}}_{\bar{j}}\overline{N^q_{\bar{p}\bar{j}}}\big)-\Big(J\Big(\frac{\partial^{0,1}}{\partial \bar{w}_j}\Big)\Big(\sum_{c=1}^n \omega_{c\bar{c}} \Big)\Big)\overline{N^q_{\bar{p}\bar{j}}}+J^j_q \Big(\frac{\partial^{0,1} \omega_{m\bar{i}}}{\partial \bar{w}_i}\overline{N^j_{\bar{m}\bar{p}}}+\frac{\partial^{0,1} \omega_{m\bar{p}}}{\partial \bar{w}_i}\overline{N^j_{\bar{i}\bar{m}}}\\
&-\frac{\partial^{0,1} \omega_{j\bar{m}}}{\partial \bar{w}_i}\overline{N^m_{\bar{i}\bar{p}}}-\sum_{c=1}^n \frac{\partial^{0,1} \omega_{c\bar{c}}}{\partial \bar{w}_i}\overline{N^j_{\bar{i}\bar{p}}}\Big)-\frac{\partial^{0,1}}{\partial \bar{w}_i}\big(J^j_q \overline{N^j_{\bar{i}\bar{p}}}\big)-\frac{\partial^{1,0} J^i_{\bar{j}}}{\partial w_q}\overline{N^i_{\bar{p}\bar{j}}}+\frac{\partial^{0,1} J^{\bar{p}}_{\bar{j}}}{\partial \bar{w}_i} \overline{N^q_{\bar{i}\bar{j}}}\Big]\\
&-\big(2g'_{qm} N^v_{\bar{i}\bar{p}} \overline{N^v_{\bar{i}\bar{m}}}-g'_{qv} N^v_{\bar{i}\bar{j}} \overline{N^p_{\bar{i}\bar{j}}}\big)+g'_{qp} |N^v_{\bar{i}\bar{j}}|^2=0,
\end{split}
\end{equation*}
\begin{equation*}
\begin{split}
&\widetilde{\mathcal{T}}^{\bar{q}}_{\bar{p}}:=4\Big[J^j_{\bar{q}} \Big(\frac{\partial^{0,1} \omega_{m\bar{i}}}{\partial \bar{w}_i} \overline{N^j_{\bar{m}\bar{p}}}+\frac{\partial^{0,1} \omega_{m\bar{p}}}{\partial \bar{w}_i} \overline{N^j_{\bar{i}\bar{m}}}-\frac{\partial^{0,1} \omega_{j\bar{m}}}{\partial \bar{w}_i} \overline{N^m_{\bar{i}\bar{p}}}-\sum_{c=1}^n \frac{\partial^{0,1} \omega_{c\bar{c}}}{\partial \bar{w}_i} \overline{N^m_{\bar{p}\bar{j}}}\Big)\\
&-\frac{\partial^{0,1}}{\partial \bar{w}_i} \big(J^j_{\bar{q}} \overline{N^j_{\bar{i}\bar{p}}}\big)-\frac{\partial^{0,1} J^{\bar{i}}_{\bar{j}}}{\partial \bar{w}_q} \overline{N^i_{\bar{p}\bar{j}}}\Big]-\big(2g'_{\bar{q}m} N^v_{\bar{i}\bar{p}} \overline{N^v_{\bar{i}\bar{m}}}-g'_{\bar{q}v} N^v_{\bar{i}\bar{j}} \overline{N^p_{\bar{i}\bar{j}}}\big)+g'_{\bar{q}p} |N^v_{\bar{i}\bar{j}}|^2=0,
\end{split}
\end{equation*}
and that of $\mathcal{N}$ is 
\[\widetilde{\mathcal{T}}^q_p-g'_{q\bar{p}} |N^v_{\bar{i}\bar{j}}|^2=0,\]

\[\widetilde{\mathcal{T}}^{\bar{q}}_p-g'_{\bar{q}\bar{p}} |N^v_{\bar{i}\bar{j}}|^2=0,\]

\[\widetilde{\mathcal{T}}^q_{\bar{p}}+4\Big[\Big(J\Big(\frac{\partial^{0,1}}{\partial \bar{w}_j}\Big)\Big(\sum_{c=1}^n \omega_{c\bar{c}} \Big)\Big)\overline{N^q_{\bar{p}\bar{j}}}+J^j_q \sum_{c=1}^n \frac{\partial^{0,1} \omega_{c\bar{c}}}{\partial \bar{w}_i}\overline{N^j_{\bar{i}\bar{p}}}  \Big]-g'_{qp} |N^v_{\bar{i}\bar{j}}|^2=0,\]

\[\widetilde{\mathcal{T}}^{\bar{q}}_{\bar{p}}+4J^j_{\bar{q}} \sum_{c=1}^n \frac{\partial^{0,1} \omega_{c\bar{c}}}{\partial \bar{w}_i} \overline{N^m_{\bar{p}\bar{j}}}-g'_{\bar{q}p} |N^v_{\bar{i}\bar{j}}|^2=0.\]
\end{proposition}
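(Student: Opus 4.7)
The plan is to begin from the first-variation formulas of Proposition~\ref{first}, substitute the local expression provided by Lemma~\ref{step} for $\langle dN^{0,2}_J(u),N_J\rangle_{g_{J+u}}$, and then integrate by parts every term that carries a derivative of $u$. Because $u$ is compactly supported in $U_p$, all boundary terms vanish and one is left with an integral of the form $\int_{U_p} \sum \mathcal{E}^{\beta}_{\alpha}\, u^{\alpha}_{\beta}\, dV$, where $\alpha,\beta$ range over barred and unbarred upper/lower indices. The Euler--Lagrange system $\mathcal{E}^{\beta}_{\alpha}(p)=0$ then follows from the fundamental lemma of the calculus of variations, applied separately to each of the four independent components of $u$.

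Concretely, I would first rewrite each integrand $2\Re[\langle dN^{0,2}_J(u),N_J\rangle_{g_{J+u}}]\,\vol$ appearing in Proposition~\ref{first} using Lemma~\ref{step}. That decomposition produces four terms of schematic shape $h\,h\,h\,J\,(\partial u)\,\overline{N}$, two algebraic terms $h\,h\,h\,u\,(\partial J)\,\overline{N}$, and one commutator term $h\,h\,h\,\langle u[\partial^{0,1},J\partial^{0,1}],dw^{1,0}\rangle\,\overline{N}$. After multiplying by $\vol_{g_{J+u}}=\det(h)\,dV$ (for $\widetilde{\mathcal N}$) or by the $J$-independent $\vol_g=dV$ (for $\mathcal N$), Lemma~\ref{libp} is used to transfer each $w_i$ or $\bar w_i$ derivative off $u$ and onto the accompanying coefficient bundle $h^{\bullet\bar{\bullet}}h^{\bullet\bar{\bullet}}h_{\bullet\bar{\bullet}}J^{\bullet}_{\bullet}\overline{N^{\bullet}_{\bullet\bullet}}\det(h)$. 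Because Lemma~\ref{ach} yields $h_{i\bar j}(p)=\delta_{ij}+O(u)$, Lemma~\ref{deri} identifies every first derivative of these bundles at $p$ with one of $\tau$, $\tau'$, $\overline{\tau}$, or $\overline{\tau'}$, which is where the coefficients $g'_{\bullet\bullet}$ in the stated system ultimately come from.

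I would then substitute into the first variation the expressions from Lemma~\ref{gam} for the metric-variation piece $d_{g_J}(\|N\|^2_{g_{J+u}}\vol_{g_{J+u}})\cdot\gamma$ and, for $\widetilde{\mathcal N}$ only, the volume-variation piece $d_{\vol}(\|N_J\|^2\vol_{g_J})\cdot d_J(\vol_{g_J})(u)$. Both are already pointwise linear in the four components $u^k_s,u^{\bar k}_s,u^k_{\bar s},u^{\bar k}_{\bar s}$ with coefficients built from $g'_{\bullet\bullet}$ and contractions of $N$ with $\overline N$, so no further integration by parts is needed there. For $\mathcal N$ the volume factor is $J$-independent, so equation~\ref{eq11} drops out and only the metric-variation line contributes; for $\widetilde{\mathcal N}$ both contributions survive, and this difference is exactly what produces the extra $g'_{\bullet\bullet}|N^v_{\bar i\bar j}|^2$ terms distinguishing the two Euler--Lagrange systems.

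The last step is to collect everything according to which of the four types $u^p_q,\,u^{\bar p}_q,\,u^p_{\bar q},\,u^{\bar p}_{\bar q}$ each term multiplies, relabel dummy indices so that the free pair is $(p,q)$, and equate each of the four resulting coefficients to zero. The $u^p_q$- and $u^{\bar p}_q$-rows pick up only the $J^k_s$- and $J^k_{\bar s}$-contributions from Lemma~\ref{step}, together with the Lemma~\ref{gam} terms, and give the short equations $\widetilde{\mathcal T}^q_p=0$ and $\widetilde{\mathcal T}^{\bar q}_p=0$. The $u^p_{\bar q}$- and $u^{\bar p}_{\bar q}$-rows gather all four derivative terms together with the $u(\partial J)$ and commutator pieces, and assembling these two rows is the main obstacle: one has to track the provenance of every $\tau$ and $\tau'$, keep the correct signs produced by integration by parts, respect the implicit $(i,j)$-symmetries of the various traces, and avoid double-counting when the commutator $u[\partial^{0,1},J\partial^{0,1}]$ is expanded via Leibniz. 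Once that bookkeeping is carried out, the fundamental lemma closes the argument for both $\widetilde{\mathcal N}$ and $\mathcal N$.
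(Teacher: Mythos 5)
Your plan is correct and follows essentially the same route as the paper: substitute Lemma~3 into the first variation from Proposition~1, integrate by parts via Lemma~4 using the compact support of $u$, add the algebraic contributions of Lemma~5, and then collect the coefficients of $u^k_s,u^{\bar k}_s,u^k_{\bar s},u^{\bar k}_{\bar s}$ into a tensor whose vanishing is the Euler--Lagrange system. The only minor imprecision is attributing the entire difference between the two systems to the dropped volume-variation term; the absence of the $\det(h)$ factor in the integration by parts for $\mathcal N$ also removes the $\sum_c\partial^{0,1}\omega_{c\bar c}/\partial\bar w_i$ trace terms, which you implicitly account for but do not state.
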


\begin{proof}
The procedure is to use Lemma \ref{libp} to integrate by parts the terms involving derivatives of $u$ in the first variation of $\widetilde{\mathcal{N}}$ (Proposition \ref{first}), and then isolate $u$ in the resulting formula by writing it as the $g_J$-inner product of a tensor, the Euler-Lagrange equation at $p,$ and $u.$ Lemmas \ref{step}, \ref{gam} lead to 

\begin{align*}
d_J \widetilde{\mathcal{N}}(J)(u)&=4\Re \Big[\int_X \Big(h^{s\bar{m}}h^{j\bar{p}}h_{k\bar{q}} J^i_{\bar{j}} \frac{\partial ^{1,0} u^k_{\bar{s}}}{\partial w_i}+h^{i\bar{m}}h^{j\bar{p}}h_{k\bar{q}}\Big(\frac{\partial ^{0,1} u^s_{\bar{j}}}{\partial \bar{w}_i}J^k_s +\frac{\partial ^{0,1} u^{\bar{s}}_{\bar{j}}}{\partial \bar{w}_i}J^k_{\bar{s}}\Big)+\\
& h^{s\bar{m}}h^{j\bar{p}}h_{k\bar{q}} \frac{\partial^{0,1} u^k_{\bar{s}}}{\partial \bar{w}_i}J^{\bar{i}}_{\bar{j}}\Big)\overline{N^q_{\bar{m}\bar{p}}} \vol_h \Big](p)\\
&-4\Re \Big[\int_X h^{s\bar{m}}h^{j\bar{p}}h_{k\bar{q}} \Big(u^i_{\bar{s}} \frac{\partial ^{1,0} J^k_{\bar{j}}}{\partial w_i}+u^{\bar{i}}_{\bar{s}} \frac{\partial ^{0,1} J^k_{\bar{j}}}{\partial \bar{w}_i}\Big)\overline{N^q_{\bar{m}\bar{p}}} \vol_{g_{J+u}} \Big](p)+\\
&4\Re \Big[\int_X h^{i\bar{m}}h^{j\bar{p}}h_{k\bar{q}}\Big\langle u\Big[\frac{\partial^{0,1}}{\partial \bar{w}_i},J\frac{\partial^{0,1}}{\partial \bar{w}_j}\Big],dw_k^{1,0}\Big\rangle \overline{N^q_{\bar{m}\bar{p}}} \vol_{g_{J+u}}\Big](p)\\
&-\int_X \Big(u^k_s \big(2g'_{k\bar{j}} N^v_{\bar{i}\bar{j}} \overline{N^v_{\bar{i}\bar{s}}}-g'_{k\bar{m}} N^s_{\bar{i}\bar{j}} \overline{N^m_{\bar{i}\bar{j}}}\big)+u^{\bar{k}}_s \big(2g'_{\bar{k}\bar{j}} N^v_{\bar{i}\bar{j}} \overline{N^v_{\bar{i}\bar{s}}}-g'_{\bar{k}\bar{m}} N^s_{\bar{i}\bar{j}} \overline{N^m_{\bar{i}\bar{j}}}\big)+\\
&u^k_{\bar{s}} \big(2g'_{km} N^v_{\bar{i}\bar{s}} \overline{N^v_{\bar{i}\bar{m}}}-g'_{kv} N^v_{\bar{i}\bar{j}} \overline{N^s_{\bar{i}\bar{j}}}\big)+u^{\bar{k}}_{\bar{s}} \big(2g'_{\bar{k}m} N^v_{\bar{i}\bar{s}} \overline{N^v_{\bar{i}\bar{m}}}-g'_{\bar{k}v} N^v_{\bar{i}\bar{j}} \overline{N^s_{\bar{i}\bar{j}}}\big)\Big)dV+\\
&\int_X \big(u^k_s g'_{k\bar{s}}+u^{\bar{k}}_s g'_{\bar{k}\bar{s}} + u^k_{\bar{s}} g'_{ks}+u^{\bar{k}}_{\bar{s}} g'_{\bar{k}s} \big)|N^v_{\bar{i}\bar{j}}|^2 dV \\
&=4\Re \Big[\int_X \Big(\overline{\tau'_{s\bar{m}i}} J^i_{\bar{j}} \overline{N^k_{\bar{m}\bar{j}}}+\overline{\tau'_{j\bar{p}i}} J^i_{\bar{j}} \overline{N^k_{\bar{s}\bar{p}}}-\tau_{k\bar{q}i} J^i_{\bar{j}} \overline{N^q_{\bar{s}\bar{j}}}-\frac{\partial^{1,0} J^i_{\bar{j}}}{\partial w_i} \overline{N^k_{\bar{s}\bar{j}}}-J^i_{\bar{j}} \frac{\partial^{1,0} \overline{N^k_{\bar{s}\bar{j}}}}{\partial w_i}\\
&-J^i_{\bar{j}} \overline{N^k_{\bar{s}\bar{j}}} \big(\sum_{c=1}^n \tau_{c\bar{c}i}\big)\Big) u^k_{\bar{s}} dV +\int_X \Big(\overline{\tau_{i\bar{m}i}} J^k_s \overline{N^k_{\bar{m}\bar{j}}}+\overline{\tau_{j\bar{p}i}} J^k_s \overline{N^k_{\bar{i}\bar{p}}}-\tau'_{k\bar{q}i} J^k_s \overline{N^q_{\bar{i}\bar{j}}}\\
&-\frac{\partial^{0,1} J^k_s}{\partial \bar{w}_i} \overline{N^k_{\bar{i}\bar{j}}}-J^k_s \frac{\partial^{0,1} \overline{N^k_{\bar{i}\bar{j}}}}{\partial \bar{w}_i}-J^k_s \overline{N^k_{\bar{i}\bar{j}}} \big(\sum_{c=1}^n \tau'_{c\bar{c}i}\big)\Big) u^s_{\bar{j}} dV + \int_X \Big(\overline{\tau_{i\bar{m}i}} J^k_{\bar{s}} \overline{N^k_{\bar{m}\bar{j}}}+\\
&\overline{\tau_{j\bar{p}i}} J^k_{\bar{s}} \overline{N^k_{\bar{i}\bar{p}}}-\tau'_{k\bar{q}i} J^k_{\bar{s}} \overline{N^q_{\bar{i}\bar{j}}}-\frac{\partial^{0,1} J^k_{\bar{s}}}{\partial \bar{w}_i} \overline{N^k_{\bar{i}\bar{j}}}-J^k_{\bar{s}} \frac{\partial^{0,1} \overline{N^k_{\bar{i}\bar{j}}}}{\partial \bar{w}_i}-J^k_{\bar{s}} \overline{N^k_{\bar{i}\bar{j}}} \big(\sum_{c=1}^n \tau'_{c\bar{c}i}\big)\Big) u^{\bar{s}}_{\bar{j}} dV +\\
& \int_X \Big(\overline{\tau_{s\bar{m}i}} J^{\bar{i}}_{\bar{j}} \overline{N^k_{\bar{m}\bar{j}}}+\overline{\tau_{j\bar{p}i}} J^{\bar{i}}_{\bar{j}} \overline{N^k_{\bar{s}\bar{p}}}-\tau'_{k\bar{q}i} J^{\bar{i}}_{\bar{j}} \overline{N^q_{\bar{s}\bar{j}}}-\frac{\partial^{0,1} J^{\bar{i}}_{\bar{j}}}{\partial \bar{w}_i} \overline{N^k_{\bar{s}\bar{j}}}-J^{\bar{i}}_{\bar{j}}\frac{\partial^{0,1} \overline{N^k_{\bar{s}\bar{j}}}}{\partial \bar{w}_i}\\
&-J^{\bar{i}}_{\bar{j}} \overline{N^k_{\bar{s}\bar{j}}} \big(\sum_{c=1}^n \tau'_{c\bar{c}i}\big)\Big) u^k_{\bar{s}} dV\Big]-4\Re \Big[\int_X \Big(u^k_{\bar{s}} \frac{\partial^{1,0} J^i_{\bar{j}}}{\partial w_k}+u^{\bar{k}}_{\bar{s}} \frac{\partial^{0,1} J^i_{\bar{j}}}{\partial \bar{w}_k} \Big)\overline{N^i_{\bar{s}\bar{j}}}dV \Big]\\
&+4\Re \Big[\int_X \Big(\frac{\partial^{0,1} J^s_{\bar{j}}}{\partial \bar{w}_i}u^k_s+\frac{\partial^{0,1} J^{\bar{s}}_{\bar{j}}}{\partial \bar{w}_i}u^k_{\bar{s}}\Big)\overline{N^k_{\bar{i}\bar{j}}} dV\Big]\\
&-\int_X \Big(u^k_s \big(2g'_{k\bar{j}} N^v_{\bar{i}\bar{j}} \overline{N^v_{\bar{i}\bar{s}}}-g'_{k\bar{m}} N^s_{\bar{i}\bar{j}} \overline{N^m_{\bar{i}\bar{j}}}\big)+u^{\bar{k}}_s \big(2g'_{\bar{k}\bar{j}} N^v_{\bar{i}\bar{j}} \overline{N^v_{\bar{i}\bar{s}}}-g'_{\bar{k}\bar{m}} N^s_{\bar{i}\bar{j}} \overline{N^m_{\bar{i}\bar{j}}}\big)+\\
&u^k_{\bar{s}} \big(2g'_{km} N^v_{\bar{i}\bar{s}} \overline{N^v_{\bar{i}\bar{m}}}-g'_{kv} N^v_{\bar{i}\bar{j}} \overline{N^s_{\bar{i}\bar{j}}}\big)+u^{\bar{k}}_{\bar{s}} \big(2g'_{\bar{k}m} N^v_{\bar{i}\bar{s}} \overline{N^v_{\bar{i}\bar{m}}}-g'_{\bar{k}v} N^v_{\bar{i}\bar{j}} \overline{N^s_{\bar{i}\bar{j}}}\big)\Big)dV+\\
&\int_X \big(u^k_s g'_{k\bar{s}}+u^{\bar{k}}_s g'_{\bar{k}\bar{s}} + u^k_{\bar{s}} g'_{ks}+u^{\bar{k}}_{\bar{s}} g'_{\bar{k}s} \big)|N^v_{\bar{i}\bar{j}}|^2 dV \\
&=\Re\Big\{\int_X \Big[\Big(4 \Big(\big(\overline{\tau'_{s\bar{m}i}} J^i_{\bar{j}}+\overline{\tau_{s\bar{m}i}} J^{\bar{i}}_{\bar{j}}\big)\overline{N^k_{\bar{m}\bar{j}}}+\big(\overline{\tau'_{j\bar{m}i}} J^i_{\bar{j}}+\overline{\tau_{j\bar{m}i}} J^{\bar{i}}_{\bar{j}}\big)\overline{N^k_{\bar{s}\bar{m}}}\\
&-\big(\tau_{k\bar{m}i} J^i_{\bar{j}}+\tau'_{k\bar{m}i} J^{\bar{i}}_{\bar{j}}\big)\overline{N^m_{\bar{s}\bar{j}}}-\Big(\frac{\partial^{1,0} J^i_{\bar{j}}}{\partial w_i}+\frac{\partial^{0,1} J^{\bar{i}}_{\bar{j}}}{\partial \bar{w}_i} \Big)\overline{N^k_{\bar{s}\bar{j}}}-J^i_{\bar{j}}\frac{\partial^{1,0} \overline{N^k_{\bar{s}\bar{j}}}}{\partial w_i}\\
&-J^{\bar{i}}_{\bar{j}}\frac{\partial^{0,1} \overline{N^k_{\bar{s}\bar{j}}}}{\partial \bar{w}_i}-\overline{N^k_{\bar{s}\bar{j}}}\Big(J^i_{\bar{j}}\big(\sum_{c=1}^n \tau_{c\bar{c}i}\big)+J^{\bar{i}}_{\bar{j}} \big(\sum_{c=1}^n \tau'_{c\bar{c}i}\big)\Big)+\overline{\tau_{i\bar{m}i}} J^j_k \overline{N^j_{\bar{m}\bar{s}}}+ \\
&\overline{\tau_{s\bar{m}i}} J^j_k \overline{N^j_{\bar{i}\bar{m}}}-\tau_{j\bar{m}i} J^j_k \overline{N^m_{\bar{i}\bar{s}}}-\frac{\partial^{0,1} J^j_k}{\partial \bar{w}_i}\overline{N^j_{\bar{i}\bar{s}}}-J^j_k \frac{\partial^{0,1} \overline{N^j_{\bar{i}\bar{s}}}}{\partial \bar{w}_i}-J^j_k \overline{N^j_{\bar{i}\bar{s}}}\big(\sum_{c=1}^n \tau'_{c\bar{c}i}\big)\\
&-\frac{\partial^{1,0} J^i_{\bar{j}}}{\partial w_k}\overline{N^i_{\bar{s}\bar{j}}}+\frac{\partial^{0,1} J^{\bar{s}}_{\bar{j}}}{\partial \bar{w}_i}\overline{N^k_{\bar{i}\bar{j}}}\Big)-\big(2g'_{km} N^v_{\bar{i}\bar{s}} \overline{N^v_{\bar{i}\bar{m}}}-g'_{kv} N^v_{\bar{i}\bar{j}} \overline{N^s_{\bar{i}\bar{j}}}\big)+g'_{ks} |N^v_{\bar{i}\bar{j}}|^2 \Big)u^k_{\bar{s}}+\\
&\Big(4\Big(\overline{\tau_{i\bar{m}i}}J^j_{\bar{k}} \overline{N^j_{\bar{m}\bar{s}}}+ \overline{\tau_{s\bar{m}i}}J^j_{\bar{k}} \overline{N^j_{\bar{i}\bar{m}}}-\tau'_{j\bar{m}i} J^j_{\bar{k}} \overline{N^m_{\bar{i}\bar{s}}}-\frac{\partial^{0,1} J^j_{\bar{k}}}{\partial \bar{w}_i}\overline{N^j_{\bar{i}\bar{s}}}-J^j_{\bar{k}}\frac{\partial^{0,1} \overline{N^j_{\bar{i}\bar{s}}}}{\partial \bar{w}_i}\\
&-J^j_{\bar{k}}\overline{N^j_{\bar{i}\bar{s}}}\big(\sum_{c=1}^n \tau'_{c\bar{c}i}\big)-\frac{\partial^{0,1} J^i_{\bar{j}}}{\partial \bar{w}_k}\overline{N^i_{\bar{s}\bar{j}}}\Big)-\big(2g'_{\bar{k}m} N^v_{\bar{i}\bar{s}} \overline{N^v_{\bar{i}\bar{m}}}-g'_{\bar{k}v} N^v_{\bar{i}\bar{j}} \overline{N^s_{\bar{i}\bar{j}}}\big)+\\
&g'_{\bar{k}s} |N^v_{\bar{i}\bar{j}}|^2\Big)u^{\bar{k}}_{\bar{s}}+\Big(-\big(2g'_{\bar{k}\bar{j}} N^v_{\bar{i}\bar{j}} \overline{N^v_{\bar{i}\bar{s}}}-g'_{\bar{k}\bar{m}} N^s_{\bar{i}\bar{j}} \overline{N^m_{\bar{i}\bar{j}}}\big)+g'_{\bar{k}\bar{s}} |N^v_{\bar{i}\bar{j}}|^2 \Big)u^{\bar{k}}_s+\Big(4\frac{\partial^{0,1} J^s_{\bar{j}}}{\partial \bar{w}_i}\overline{N^k_{\bar{i}\bar{j}}}\\
&-\big(2g'_{k\bar{j}} N^v_{\bar{i}\bar{j}} \overline{N^v_{\bar{i}\bar{s}}}-g'_{k\bar{m}} N^s_{\bar{i}\bar{j}} \overline{N^m_{\bar{i}\bar{j}}}\big)+g'_{k\bar{s}} |N^v_{\bar{i}\bar{j}}|^2 \Big)u^k_s\Big] dV \Big\}
\end{align*}

We may use $g_J(p)=I$ to induce the inner product $
\langle T,V\rangle=\sum_{s,k=1}^n (T^k_s \overline{V^k_s}+T^{\bar{k}}_s \overline{V^{\bar{k}}_s}+T^k_{\bar{s}} \overline{V^k_{\bar{s}}}+T^{\bar{k}}_{\bar{s}} \overline{V^{\bar{k}}_{\bar{s}}})$ of any $T,V \in C^{\infty}\big(X,\End(T^{\mathbb{C}}_{X,p})\big).$ Consider now the tensor $\widetilde{\mathcal{T}}_J=\widetilde{\mathcal{T}}^q_p dw_p^{1,0} \otimes \frac{\partial^{1,0}}{\partial w_q}+\widetilde{\mathcal{T}}^{\bar{q}}_p dw_p^{1,0} \otimes \frac{\partial^{0,1}}{\partial \bar{w}_q}+\widetilde{\mathcal{T}}^{q}_{\bar{p}} d\bar{w}_p^{0,1} \otimes \frac{\partial^{1,0}}{\partial w_q}+\widetilde{\mathcal{T}}^{\bar{q}}_{\bar{p}} d\bar{w}_p^{0,1} \otimes \frac{\partial^{0,1}}{\partial \bar{w}_q},$ where 
\begin{equation*}
\begin{split}
\widetilde{\mathcal{T}}^q_p &=4\frac{\partial^{0,1} J^p_{\bar{j}}}{\partial \bar{w}_i}\overline{N^q_{\bar{i}\bar{j}}}-\big(2g'_{q\bar{j}} N^v_{\bar{i}\bar{j}} \overline{N^v_{\bar{i}\bar{p}}}-g'_{q\bar{m}} N^p_{\bar{i}\bar{j}} \overline{N^m_{\bar{i}\bar{j}}}\big)+g'_{q\bar{p}} |N^v_{\bar{i}\bar{j}}|^2,
\end{split}
\end{equation*}

\begin{equation*}
\begin{split}
\widetilde{\mathcal{T}}^{\bar{q}}_p&=-\big(2g'_{\bar{q}\bar{j}} N^v_{\bar{i}\bar{j}} \overline{N^v_{\bar{i}\bar{p}}}-g'_{\bar{q}\bar{m}} N^p_{\bar{i}\bar{j}} \overline{N^m_{\bar{i}\bar{j}}}\big)+g'_{\bar{q}\bar{p}} |N^v_{\bar{i}\bar{j}}|^2 ,
\end{split}
\end{equation*}
\begin{equation*}
\begin{split}
\widetilde{\mathcal{T}}^{q}_{\bar{p}}&=4 \Big(\big(\overline{\tau'_{p\bar{m}i}} J^i_{\bar{j}}+\overline{\tau_{p\bar{m}i}} J^{\bar{i}}_{\bar{j}}\big)\overline{N^q_{\bar{m}\bar{j}}}+\big(\overline{\tau'_{j\bar{m}i}} J^i_{\bar{j}}+\overline{\tau_{j\bar{m}i}} J^{\bar{i}}_{\bar{j}}\big)\overline{N^q_{\bar{p}\bar{m}}}\\
&-\big(\tau_{q\bar{m}i} J^i_{\bar{j}}+\tau'_{q\bar{m}i} J^{\bar{i}}_{\bar{j}}\big)\overline{N^m_{\bar{p}\bar{j}}}-\Big(\frac{\partial^{1,0} J^i_{\bar{j}}}{\partial w_i}+\frac{\partial^{0,1} J^{\bar{i}}_{\bar{j}}}{\partial \bar{w}_i} \Big)\overline{N^q_{\bar{p}\bar{j}}}-J^i_{\bar{j}}\frac{\partial^{1,0} \overline{N^q_{\bar{p}\bar{j}}}}{\partial w_i}\\
&-J^{\bar{i}}_{\bar{j}}\frac{\partial^{0,1} \overline{N^q_{\bar{p}\bar{j}}}}{\partial \bar{w}_i}-\overline{N^q_{\bar{p}\bar{j}}}\Big(J^i_{\bar{j}}\big(\sum_{c=1}^n \tau_{c\bar{c}i}\big)+J^{\bar{i}}_{\bar{j}} \big(\sum_{c=1}^n \tau'_{c\bar{c}i}\big)\Big)+\overline{\tau_{i\bar{m}i}} J^j_q \overline{N^j_{\bar{m}\bar{p}}}+ \\
&\overline{\tau_{p\bar{m}i}} J^j_q \overline{N^j_{\bar{i}\bar{m}}}-\tau'_{j\bar{m}i} J^j_q \overline{N^m_{\bar{i}\bar{p}}}-\frac{\partial^{0,1} J^j_q}{\partial \bar{w}_i}\overline{N^j_{\bar{i}\bar{p}}}-J^j_q \frac{\partial^{0,1} \overline{N^j_{\bar{i}\bar{p}}}}{\partial \bar{w}_i}-J^j_q \overline{N^j_{\bar{i}\bar{p}}}\big(\sum_{c=1}^n \tau'_{c\bar{c}i}\big)\\
&-\frac{\partial^{1,0} J^i_{\bar{j}}}{\partial w_q}\overline{N^i_{\bar{p}\bar{j}}}+\frac{\partial^{0,1} J^{\bar{p}}_{\bar{j}}}{\partial \bar{w}_i}\overline{N^q_{\bar{i}\bar{j}}}\Big)-\big(2g'_{qm} N^v_{\bar{i}\bar{p}} \overline{N^v_{\bar{i}\bar{m}}}-g'_{qv} N^v_{\bar{i}\bar{j}} \overline{N^p_{\bar{i}\bar{j}}}\big)+\\
& g'_{qp} |N^v_{\bar{i}\bar{j}}|^2 ,
\end{split}
\end{equation*}
and where
\begin{equation*}
\begin{split}
\widetilde{\mathcal{T}}^{\bar{q}}_{\bar{p}} &=4\Big(\overline{\tau_{i\bar{m}i}}J^j_{\bar{q}} \overline{N^j_{\bar{m}\bar{p}}}+ \overline{\tau_{p\bar{m}i}}J^j_{\bar{q}} \overline{N^j_{\bar{i}\bar{m}}}-\tau'_{j\bar{m}i} J^j_{\bar{q}} \overline{N^m_{\bar{i}\bar{p}}}-\frac{\partial^{0,1} J^j_{\bar{q}}}{\partial \bar{w}_i}\overline{N^j_{\bar{i}\bar{p}}}-J^j_{\bar{q}}\frac{\partial^{0,1} \overline{N^j_{\bar{i}\bar{p}}}}{\partial \bar{w}_i}-J^j_{\bar{q}}\overline{N^j_{\bar{i}\bar{p}}}\big(\sum_{c=1}^n \tau'_{c\bar{c}i}\big)\\
&-\frac{\partial^{0,1} J^i_{\bar{j}}}{\partial \bar{w}_q}\overline{N^i_{\bar{p}\bar{j}}}\Big)-\big(2g'_{\bar{q}m} N^v_{\bar{i}\bar{p}} \overline{N^v_{\bar{i}\bar{m}}}-g'_{\bar{q}v} N^v_{\bar{i}\bar{j}} \overline{N^p_{\bar{i}\bar{j}}}\big)+g'_{\bar{q}p} |N^v_{\bar{i}\bar{j}}|^2.
\end{split}
\end{equation*}
Then, \[d_J \widetilde{\mathcal{N}}(J)(u)=\Re \Big\{\int_X \langle \widetilde{\mathcal{T}}_J, \overline{u}\rangle_I dV\Big\},\] and so $\widetilde{\mathcal{T}}_J=0$ is the Euler-Lagrange equation at $p.$ 

We can rewrite $\mathcal{T}^{\bar{q}}_p,$ and $\mathcal{T}^q_{\bar{p}}$ somewhat more meaningfully, using that $J\Big(\frac{\partial^{0,1}}{\partial \bar{w}_j}\Big)=J^i_{\bar{j}} \frac{\partial^{1,0}}{\partial w_i}+J^{\bar{i}}_{\bar{j}}\frac{\partial^{0,1}}{\partial \bar{w}_i},$ and the hermitian nature of the fundamental form $\omega$: 

\begin{equation*}
\begin{split}
\mathcal{T}^{\bar{q}}_{\bar{p}} &=4\Big[J^j_{\bar{q}} \Big(\frac{\partial^{0,1} \omega_{m\bar{i}}}{\partial \bar{w}_i} \overline{N^j_{\bar{m}\bar{p}}}+\frac{\partial^{0,1} \omega_{m\bar{p}}}{\partial \bar{w}_i} \overline{N^j_{\bar{i}\bar{m}}}-\frac{\partial^{0,1} \omega_{j\bar{m}}}{\partial \bar{w}_i} \overline{N^m_{\bar{i}\bar{p}}}-\sum_{c=1}^n \frac{\partial^{0,1} \omega_{c\bar{c}}}{\partial \bar{w}_i} \overline{N^m_{\bar{p}\bar{j}}}\Big)\\
&-\frac{\partial^{0,1}}{\partial \bar{w}_i} \big(J^j_{\bar{q}} \overline{N^j_{\bar{i}\bar{p}}}\big)-\frac{\partial^{0,1} J^{\bar{i}}_{\bar{j}}}{\partial \bar{w}_q} \overline{N^i_{\bar{p}\bar{j}}}\Big]-\big(2g'_{\bar{q}m} N^v_{\bar{i}\bar{p}} \overline{N^v_{\bar{i}\bar{m}}}-g'_{\bar{q}v} N^v_{\bar{i}\bar{j}} \overline{N^p_{\bar{i}\bar{j}}}\big)+g'_{\bar{q}p} |N^v_{\bar{i}\bar{j}}|^2,
\end{split}
\end{equation*}
\begin{equation*}
\begin{split}
\mathcal{T}^q_{\bar{p}}&=4\Big[\Big(J\Big(\frac{\partial^{0,1}}{\partial \bar{w}_j}\Big)\omega_{m\bar{p}}\Big)\overline{N^q_{\bar{m}\bar{j}}}+\Big(J\Big(\frac{\partial^{0,1}}{\partial \bar{w}_j}\Big)\omega_{m\bar{j}}\Big)\overline{N^q_{\bar{p}\bar{m}}}-\Big(J\Big(\frac{\partial^{0,1}}{\partial \bar{w}_j}\Big)\omega_{q\bar{m}}\Big)\overline{N^m_{\bar{p}\bar{j}}}-\frac{\partial^{1,0}}{\partial w_i} \big(J^i_{\bar{j}}\overline{N^q_{\bar{p}\bar{j}}}\big)\\
&-\frac{\partial^{0,1}}{\partial \bar{w}_i}\big(J^{\bar{i}}_{\bar{j}}\overline{N^q_{\bar{p}\bar{j}}}\big)-\Big(J\Big(\frac{\partial^{0,1}}{\partial \bar{w}_j}\Big)\Big(\sum_{c=1}^n \omega_{c\bar{c}} \Big)\Big)\overline{N^q_{\bar{p}\bar{j}}}+J^j_q \Big(\frac{\partial^{0,1} \omega_{m\bar{i}}}{\partial \bar{w}_i}\overline{N^j_{\bar{m}\bar{p}}}+\frac{\partial^{0,1} \omega_{m\bar{p}}}{\partial \bar{w}_i}\overline{N^j_{\bar{i}\bar{m}}}\\
&-\frac{\partial^{0,1} \omega_{j\bar{m}}}{\partial \bar{w}_i}\overline{N^m_{\bar{i}\bar{p}}}-\sum_{c=1}^n \frac{\partial^{0,1} \omega_{c\bar{c}}}{\partial \bar{w}_i}\overline{N^j_{\bar{i}\bar{p}}}\Big)-\frac{\partial^{0,1}}{\partial \bar{w}_i}\big(J^j_q \overline{N^j_{\bar{i}\bar{p}}}\big)-\frac{\partial^{1,0} J^i_{\bar{j}}}{\partial w_q}\overline{N^i_{\bar{p}\bar{j}}}+\frac{\partial^{0,1} J^{\bar{p}}_{\bar{j}}}{\partial \bar{w}_i} \overline{N^q_{\bar{i}\bar{j}}}\Big]\\
&-\big(2g'_{qm} N^v_{\bar{i}\bar{p}} \overline{N^v_{\bar{i}\bar{m}}}-g'_{qv} N^v_{\bar{i}\bar{j}} \overline{N^p_{\bar{i}\bar{j}}}\big)+g'_{qp} |N^v_{\bar{i}\bar{j}}|^2. 
\end{split}
\end{equation*}
In the case of $\mathcal{N},$ there is no $d_{\vol}(\|N_J\|^2_{g_J} \vol_{g_J})\cdot d_J(\vol_{g_J})(u)$ term in the first variation, neither are there derivatives of the Riemannian volume form. We can recover the Euler-Lagrange equation of $\mathcal{N}$ at $p$ from that of $\widetilde{\mathcal{N}}$ to find that it is a tensor equation $\mathcal{T}_{\mathcal{N}}=0,$ where the components of $\mathcal{T}_{\mathcal{N}}$ are of the claimed form.
\end{proof}

All integrable almost complex structures on $X$ are critical points of both $\mathcal{N},$ and $\widetilde{\mathcal{N}},$ but they are likely not the only ones. It is also unclear if $\widetilde{\mathcal{N}}$ has any advantages over $\mathcal{N}.$ It might make sense to try eliminating the derivatives of $\overline{N_J}$ that appear in the Euler-Lagrange equation of $\mathcal{N}$ since they do not directly contain information about integrability.

\noindent
Gabriella Clemente                                

\noindent
e-mail: clemente6171@gmail.com
\end{document}